\documentclass[reqno,11pt]{amsart}
\usepackage{amsmath,amsfonts,amssymb,amsxtra,latexsym,amscd,enumerate,amsthm,verbatim}

\usepackage{graphicx}
\usepackage{xcolor}
\usepackage{tikz}
\usetikzlibrary{positioning, arrows.meta}

\usepackage{hyperref}

\usepackage[margin=1.40in]{geometry}
\numberwithin{equation}{section}

\newcommand{\R}{\mathbb{R}}

\newcommand{\C}{\mathbb{C}}
\newcommand{\Z}{\mathbb{Z}}

\newcommand{\wPsi}{\widetilde{\Psi}}
\newcommand{\wOmega}{\widetilde{\Omega}}

\newtheorem{theorem}{Theorem}[section]

\newtheorem{proposition}[theorem]{Proposition}

\newtheorem{remark}[theorem]{Remark}
\newtheorem{conjecture}[theorem]{Conjecture}

\newtheorem{numresult}[theorem]{Main Numerical Result}

\begin{document}

\title[Swirl-free unstable solutions of the Navier--Stokes equation]{On the non-uniqueness of solutions of the axi-symmetric swirl-free Navier--Stokes equations, I}

\author{A. D. Ionescu}
\address{Princeton University}
\email{aionescu@math.princeton.edu}
\author{H. Jia}
\address{University of Minnesota--Twin Cities}
\email{jia@umn.edu}
\author{S. Palasek}
\address{Institute for Advanced Study}
\email{palasek@ias.edu}

\begin{abstract}
In this paper we construct numerically a new class of unstable self-similar solutions of the incompressible Navier--Stokes equations in $\mathbb{R}^3$. Our solutions are axially symmetric and homogeneous of degree $-1$ at $\infty$, and are unstable in the sense that the linearization around these solutions contains unstable modes. Solutions of this type have been discovered numerically in \cite{GuSv} and \cite{HoWaYa}, and have applications to proving non-uniqueness results. 

The main novelty in this paper is that we discover the existence of  such solutions in the space of axially symmetric swirl-free (ASSF) vector fields. These approximate solutions are defined on all of $\mathbb R^3$ and achieve global pointwise residuals of order $10^{-10}$. We discuss the numerical construction of these solutions in detail, as well as their relevance to the problem of non-uniqueness of solutions of the incompressible Navier--Stokes equations in 3D, in the space of ASSF solutions.  
\end{abstract}

\maketitle

\setcounter{tocdepth}{1}

\tableofcontents

\setcounter{tocdepth}{1}

\section{Introduction}

We consider solutions $u$ of the incompressible Navier--Stokes equation in $\mathbb{R}^3$,
\begin{equation}\label{Nav1}
\partial_tu+u\cdot\nabla u+\nabla p-\Delta u=0,\qquad \nabla\cdot u=0.
\end{equation}
In 1934, Leray \cite{Leray} pioneered the analysis of \eqref{Nav1}, showing local well-posedness of strong solutions from regular initial data and global existence of weak solutions in the energy space $L_t^\infty L_x^2\cap L_t^2H_x^1$. Two fundamental open problems were raised: whether classical solutions can develop finite time singularities, and whether Leray weak solutions are unique.

The importance of Leray weak solutions stems, in part, from the weak-strong uniqueness phenomenon: if two Leray weak solutions have the same data at $t=0$, then they continue to agree as long as either solution is regular. Consequently, from smooth data, Leray solutions can violate uniqueness only after a blow-up has occurred. A natural possibility would be that some initially smooth solutions may develop a finite-time singularity and then split into several Leray weak solutions. Rigorously establishing such a scenario remains, of course, far out of reach. Instead, there has been significant effort to demonstrate non-uniqueness of Leray solutions from \emph{singular} initial data, ideally in sharp regularity classes. 

\subsection{The main conjecture} Our goal in this paper is to provide strong numerical evidence and a theoretical framework for proving the following conjecture:

\begin{conjecture}\label{IntroConjecture}
(i) There are two different solutions 
\begin{equation}\label{pr9}
u_1, u_2\in C([0,1]:H^\alpha_x)\cap L^2_tH^{\alpha+1}_x\cap L^\infty_tL^{3,\infty}_x,\qquad \text{ for any }\alpha\in[0,1/2),
\end{equation}
of the incompressible Navier--Stokes equations \eqref{Nav1} in $\R^3$, with the same initial data $u_1(0)=u_2(0)=u_0\in H^\alpha\cap L^{3,\infty}$, $\alpha\in[0,1/2)$. These solutions are smooth in $(0,1]\times\R^3$, satisfy $t^{1/2}\|u_j(t)\|_{L^\infty}\lesssim 1$, $j\in\{1,2\}$, and satisfy the energy identity 
\begin{equation}\label{pr10}
\frac{1}{2}\|u_j(t_0)\|_{L^2}^2=\frac{1}{2}\|u_j(t_1)\|_{L^2}^2+\int_{t_0}^{t_1}\|\nabla u_j(s)\|_{L^2}^2\,ds
\end{equation}
for any $t_0\leq t_1\in[0,1]$ and $j\in\{1,2\}$.

(ii) (Strong form) The solutions $u_1, u_2$ are axially symmetric and swirl-free.
\end{conjecture}

This conjecture would provide a definitive answer to the question of non-uniqueness of solutions of the incompressible NSE in 3D, improving significantly on all the earlier non-uniqueness results in \cite{BuVi, AlBrCo, CoPa, HoWaYa, ChDaPa} (see the discussion below). The resulting solutions are Leray solutions that just miss being in the Ladyzhenskaya--Prodi--Serrin regularity class, but are in the critical space $L^\infty_tL^{3,\infty}_x\cap L^{2,\infty}_tL^\infty_x$ and in optimal Sobolev spaces. 

Remarkably, proving Conjecture \ref{IntroConjecture} (ii) in the class of axi-symmetric swirl-free vector fields (as suggested by our numerics) would have the implication that the non-uniqueness question is, in fact, completely decoupled from other outstanding questions concerning regularity of solutions of the 3D incompressible NSE. 

Indeed, the incompressible Navier--Stokes (or even Euler) equations in 3D in the case of ASSF solutions are well-known to be ``regular" equations, similar to the 2D equations, for which large-data global regularity holds (see the classical work of Ladyzhenskaya \cite{La} and Ukhovskii--Yudovich \cite{UkYu}, and the more recent paper of Gallay--\v Sver\'{a}k \cite{GaSv} for a scale-invariant global well-posedness result). Conjecture \ref{IntroConjecture} (ii) would show that sharp non-uniqueness still holds for this regular equation, and therefore is not related to unresolved issues such as singularity formation of 3D incompressible Navier--Stokes flows.

\subsubsection{Previous work on Navier--Stokes non-uniqueness}

The first construction of non-unique weak solutions of the Navier--Stokes equations was given by Buckmaster--Vicol~\cite{BuVi} in the space $u\in C_tH^\epsilon(\mathbb R^3)$ using convex integration. In the dissipative setting, despite a great deal of work in this direction, these methods generally fail to produce counterexamples close to the sharp regularity, which in the $H^s$ scale in dimension 3 is $s<1/2$.

The problem can be reframed in the context of critical or slightly super-critical non-uniqueness results relative to the Ladyzhenskaya--Prodi--Serrin (LPS) criterion, which states that weak solutions are unique in the space $L_t^p([0,T];L_x^q(\mathbb R^d))$ where $p\in[2,\infty)$ and $2/p+d/q=1$. Cheskidov--Luo~\cite{ChLu1,ChLu2} used convex integration to prove non-uniqueness slightly below the endpoints $(p,q)=(2,\infty)$ for all $d\geq2$ and $(p,q)=(\infty,2)$ when $d=2$; meanwhile, the rest of the LPS scale seems quite out of reach of these methods.

A different method based on a discrete energy cascade was introduced by Coiculescu--Palasek in \cite{CoPa} to produce the first examples of non-uniqueness at the critical regularity. These solutions are smooth after $t=0$ and lie in the critical space $L_t^\infty BMO_x^{-1}\cap L_t^{2,\infty}L_x^\infty$ in 3D, subsequently extended to dimension two and the space $L_t^\infty W_x^{-1,\infty}$ in~\cite{ChDaPa}.

The solutions described above are not Leray solutions and do not satisfy the weak-strong uniqueness principle.  Jia--\v Sver\'{a}k \cite{JiSv2} and Guillod--\v Sver\'{a}k \cite{GuSv} proposed an approach towards proving sharp non-uniqueness of Leray weak solutions, based on self-similar profiles and bifurcation. Using this method Albritton--Bru\'{e}--Colombo \cite{AlBrCo} first proved non-uniqueness of Leray solutions of the incompressible NSE with singular forcing. Very recently, Hou--Wang--Yang \cite{HoWaYa} announced a computer-assisted proof of non-uniqueness of the classical Leray solutions of the incompressible NSE without forcing. 
\smallskip

The framework we propose here for the proof of Conjecture \ref{IntroConjecture} is based on this method, with two essential improvements: 

(1) We find numerically a new class of steady self-similar profiles, which are asymptotically homogeneous of degree $-1$, linearly unstable, axially symmetric, and swirl-free. The main novelty here is the swirl-free condition; the other conditions were already part of the numerical investigations of \cite{GuSv} and \cite{HoWaYa}. Instability in our case appears to be of a different nature than in these earlier works, since it is not generated by the swirl component. For contrast, the unstable profiles constructed numerically in \cite{GuSv} are pure-swirl at infinity. The unstable profiles proposed in \cite{HoWaYa} are not provided explicitly in the paper, but the pictures suggest that they contain all three velocity fields.

(2) We prove stronger estimates on the resulting non-unique solutions of the Navier--Stokes equations (see Theorem \ref{MainThm} below), in optimal Sobolev spaces and critical LPS-type spaces (which imply in particular the Leray energy inequality in the strong form \eqref{pr10}). 

\subsection{The Navier--Stokes equations in self-similar variables}\label{maincal}
Our main idea to prove Conjecture \ref{IntroConjecture} is to exploit the link between non-uniqueness and self-similar solutions of the incompressible NSE. We are looking for scale-invariant solutions $u$ of the form
\begin{equation}\label{Nav2}
u(t,\mathbf{x})=\frac{1}{t^{1/2}}U\Big(\frac{\mathbf{x}}{t^{1/2}},\log t\Big),\qquad p(t,x)=\frac{1}{t}P\Big(\frac{\mathbf{x}}{t^{1/2}},\log t\Big).
\end{equation}
The equation \eqref{Nav1} becomes, in terms of the self-similar profiles $U,P$,
\begin{equation}\label{Nav3}
\partial_tU-\Delta U-\frac{1}{2}\mathbf{x}\cdot\nabla U-\frac{1}{2}U+U\cdot\nabla U+\nabla P=0,\qquad \nabla\cdot U=0.
\end{equation}

Steady self-similar solutions of the system \eqref{Nav3} have been investigated extensively, by many authors, and we refer to \cite{JiSv1} (or to \cite{AlGuKoRe} and \cite{GuLiXi} for the 2D case) for a longer discussion of the problem, construction of general asymptotically homogeneous solutions of degree $-1$, and more references. Our main interest in these solutions is due to their relevance to the classical problem of non-uniqueness of solutions of the incompressible Navier--Stokes equations in 3D. This link was discovered by Jia--\v Sver\'{a}k \cite{JiSv2}, and expanded by Guillod--\v Sver\'{a}k \cite{GuSv}, Albritton--Bru\'{e}--Colombo \cite{AlBrCo} and Hou--Wang--Yang \cite{HoWaYa}. We have the following precise theorem:

\begin{theorem}\label{MainThm}
Assume that there are $C^2$ divergence-free vector fields $(U,\widetilde{U})$ and a number $\lambda$ with $\Re\lambda>0$ satisfying the identities 
\begin{equation}\label{pr6}
\begin{split}
&\Delta U+\frac{1}{2}\mathbf{x}\cdot\nabla U+\frac{1}{2}U-\Pi(U\cdot\nabla U)=0,\\
&\Delta \widetilde{U}+\frac{1}{2}\mathbf{x}\cdot\nabla \widetilde{U}+\frac{1}{2}\widetilde{U}-\Pi\big(U\cdot\nabla \widetilde{U}+\widetilde{U}\cdot\nabla U\big)=\lambda\widetilde{U},
\end{split}
\end{equation}
where $\Pi$ denotes the Leray projection. Assume also that the vector fields $U,\widetilde{U}$ satisfy the bounds 
\begin{equation}\label{pr7}
|U(x)-U_0(x)|\lesssim \langle x\rangle^{-2},\qquad |\widetilde{U}(x)|\lesssim \langle x\rangle^{-2},
\end{equation}
for a divergence-free $-1$-homogeneous vector field $U_0\in C^\infty(\R^3\backslash\{0\})$. 

Then Conjecture~\ref{IntroConjecture} (i) holds. In addition, if the vector fields $U$ and $\widetilde{U}$ are axially symmetric and swirl-free then Conjecture \ref{IntroConjecture} (ii) holds.
\end{theorem}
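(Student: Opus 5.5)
Following the Jia--\v Sver\'ak / Albritton--Bru\'e--Colombo strategy, I would build the two solutions directly in the self-similar variables \eqref{Nav3}, with $\tau=\log t\in(-\infty,0]$. The first solution is the steady profile itself: $u_1(t,x)=t^{-1/2}U(x/t^{1/2})$. For the second I make the ansatz $U(\tau)=U+U^{lin}+W$. Here $U^{lin}=\Re(e^{\lambda\tau}\widetilde U)$, which solves the linearized equation by \eqref{pr6}; if $\lambda$ is not real, I take the real part of the complex eigenpair, and I may replace $\lambda$ by an eigenvalue of maximal real part. The remainder $W$ solves
\begin{equation*}
\partial_\tau W-L W=-\Pi\big(U^{lin}\cdot\nabla U^{lin}\big)-\Pi\big(U^{lin}\cdot\nabla W+W\cdot\nabla U^{lin}+W\cdot\nabla W\big),
\end{equation*}
where $L W=\Delta W+\tfrac12 x\cdot\nabla W+\tfrac12 W-\Pi(U\cdot\nabla W+W\cdot\nabla U)$. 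I will find $W$ with $\|W(\tau)\|_X\lesssim e^{2a\tau}$ as $\tau\to-\infty$, where $a=\Re\lambda$. Then $U^{lin}\neq0$ guarantees $u_2\neq u_1$ for small $t$, and after a time translation (equivalently, rescaling) both solutions live on $[0,1]$.

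\textbf{Step 1: semigroup bounds.} I take $X$ to be a weighted space such as $L^p$ with $p>3$ large (or $L^4\cap$ a $\langle x\rangle^{-2}$-weighted $L^\infty$), which accommodates the $\langle x\rangle^{-2}$ decay in \eqref{pr7}. In such a space $L$ is a compact perturbation of the Ornstein--Uhlenbeck-type operator $\Delta+\tfrac12x\cdot\nabla+\tfrac12$. The compactness uses that $U\in L^\infty$ with $|U|\lesssim\langle x\rangle^{-1}$, so the perturbation is relatively compact. The unperturbed semigroup is explicit and has spectrum in $\{\Re z\le \tfrac12-\tfrac3{2p}\}$ in $L^p$, so for large $p$ the essential spectral bound is negative. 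Hence $L$ has finitely many unstable eigenvalues, and the Gearhart--Pr\"uss/Engel--Nagel theory yields $\|e^{\tau L}\|_{X\to X}\lesssim e^{(a+\epsilon)\tau}$. Parabolic smoothing gives $\|e^{\tau L}\Pi\nabla\cdot\|_{X\to X}\lesssim \tau^{-1/2}e^{(a+\epsilon)\tau}$ for $\tau>0$.

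\textbf{Step 2: fixed point.} I write $W(\tau)=\int_{-\infty}^\tau e^{(\tau-s)L}N(s)\,ds$ and close a contraction in the norm $\sup_{\tau\le\tau_0}e^{-2a\tau}\|W(\tau)\|_X$. The forcing is of size $e^{2a s}$, and $2a>a+\epsilon$, so the integral converges, and $\tau_0\ll0$ makes the nonlinearity small.

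\textbf{Step 3: regularity and the estimates in \eqref{pr9}.} I return to physical variables and prove the estimates there.
\begin{itemize}
\item The $L^{3,\infty}$ bound for $u_1$ follows from $|U|\lesssim\langle x\rangle^{-1}$.
\item $W$ and $U^{lin}$ decay like $\langle x\rangle^{-2}$ (or lie in $L^p$), so their contribution to $u_2$ is in $L^{3,\infty}$ with norm $\lesssim t^{a}$.
\item The bound $t^{1/2}\|u_j\|_{L^\infty}\lesssim1$ follows from boundedness of the profiles.
\item Writing $u_0=U_0$ (the $-1$-homogeneous field, which lies in $L^{3,\infty}\cap H^\alpha_{loc}$), continuity at $t=0$ in $H^\alpha$ for $\alpha<1/2$ reduces to $\|t^{-1/2}(U-U_0)(x/\sqrt t)\|_{H^\alpha}$. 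By scaling this is $t^{1/4-\alpha/2}\|U-U_0\|_{\dot H^\alpha}$, which tends to $0$ exactly because $\alpha<1/2$ and $U-U_0=O(\langle x\rangle^{-2})$ is in $L^2$ with its derivatives. An $H^\alpha$ condition at infinity requires localizing $u_0$, so I would instead work with a cutoff $u_0=\chi U_0$ plus a standard perturbative argument, or note that the solutions are only locally in $H^\alpha$. The $L^2_tH^{\alpha+1}$ bound scales the same way with one extra derivative and integrates for $\alpha<1/2$.
\item The energy identity \eqref{pr10} follows since $u\in L^4_{t,x}$ (which holds for $\alpha\ge1/4$ by interpolation), together with Lions' criterion.
\end{itemize}

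\textbf{Symmetry (part (ii)).} The ASSF class is invariant under $L$, under the Leray projection and under the nonlinearity. So the fixed point can be run in the closed ASSF subspace of $X$, and $W$ is ASSF; with $U,\widetilde U$ ASSF, so are $u_1,u_2$.

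The main obstacle is Step 1: the sharp growth bound for the non-self-adjoint semigroup with unbounded drift, combined with the compactness argument in a space weak enough to contain $\langle x\rangle^{-1}$-tails yet strong enough for the nonlinear estimates. I would follow the argument of Albritton--Bru\'e--Colombo closely.
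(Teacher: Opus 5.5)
There is a genuine gap, and it sits in the step you defer.

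\textbf{Missing localization.} Your construction produces two solutions whose initial datum is $U_0$ itself. Since $U_0$ is homogeneous of degree $-1$, it is not in $L^2(\R^3)$. So the datum is not in $H^\alpha$, neither solution lies in $C([0,1]:H^\alpha_x)$, and the energy identity has no content because the energy is infinite. Conjecture~\ref{IntroConjecture}~(i) requires all three properties. Being ``only locally in $H^\alpha$'' does not prove the statement, and the ``cutoff plus a standard perturbative argument'' is the actual heart of the proof.

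The paper localizes as follows:
\begin{itemize}
\item It writes $U_0=\nabla\times(U_0\times x)$ and splits $U_0=U_{0l}+U_{0f}$, with $U_{0l}=\nabla\times(\chi\,(U_0\times x))$ and $U_{0f}=\nabla\times((1-\chi)(U_0\times x))$. This keeps both pieces divergence-free and, in part (ii), ASSF.
\item It seeks $\widetilde u=t^{-1/2}U(x/\sqrt t)-e^{t\Delta}U_{0f}+h$ with data $U_{0l}$.
\item In self-similar variables, $H$ then solves $\partial_sH=\mathcal L_UH+\Pi[b\cdot\nabla H+H\cdot\nabla b+H\cdot\nabla H]+G$, with $\|G(s)\|_X\lesssim\epsilon_R e^{s/2}$.
\end{itemize}

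\textbf{Why your Step 2 breaks.} This localization error decays only at the fixed rate $e^{s/2}$ as $s\to-\infty$, independently of the spectrum. To absorb $U^{lin}\cdot\nabla U^{lin}$ you must take $a$ to be the \emph{maximal} unstable real part and use $\|e^{\tau L}\|\lesssim e^{(a+\epsilon)\tau}$. Then $\int_{-\infty}^\tau e^{(\tau-s)L}G(s)\,ds$ is no longer controlled, and generically diverges, once $a+\epsilon\ge 1/2$. For $a>1/2$ the localization correction would also dominate your eigenmode $U^{lin}\sim e^{as}$. For $a<1/2$ your scheme could be repaired, with the remainder now $O(e^{s/2})$ instead of $O(e^{2as})$. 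However, the hypotheses place no upper bound on $a$.

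This is exactly the kind of restriction the paper avoids (compare the assumption in \cite{JiSv2} that all unstable eigenvalues have real part $<1/8$). It uses a Lyapunov--Perron formulation:
\begin{itemize}
\item take the Riesz projection $P_u$ onto the finitely many unstable eigenvalues;
\item integrate $P_{cs}\mathcal N$ forward from $-\infty$ and $P_u\mathcal N$ backward from $s=0$;
\item solve in $E_\eta$ with $\eta=\min\{1/100,\gamma_u/4\}$;
\item distinguish solutions by $P_uH(0)=\phi_{u0}$ rather than by a leading eigenmode.
\end{itemize}
The $H^\alpha$, $L^2_tH^{\alpha+1}$ and energy conclusions then follow from $H(s)\to0$ in $X^1$ together with $|\nabla^\alpha(U-e^{\Delta}U_0)|\lesssim\langle x\rangle^{-3-|\alpha|}$. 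Your $H^\alpha$ computation should likewise use $U-e^{\Delta}U_0$: since $U-U_0\sim|x|^{-1}$ at the origin, it is not in $H^1$.

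\textbf{Separate error in Step 1.} By scaling, $e^{\tau(\Delta+\frac12x\cdot\nabla+\frac12)}$ on $L^p$ has growth bound $\frac12-\frac3{2p}$. Its essential spectrum fills $\{\Re z<\frac12-\frac3{2p}\}$, as the profiles $|x|^{-\mu}$ with $\mu>3/p$ show. This bound is \emph{positive} for $p>3$ and tends to $1/2$ as $p\to\infty$, so large-$p$ Lebesgue spaces are the wrong choice. You need $L^2$-type control at infinity, as in $X=L^2\cap L^4$, where the bound is $-1/4$, or your weighted-$L^\infty$ alternative.
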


We will provide a complete proof of Theorem \ref{MainThm} in section \ref{thimpl} below. Results of this type have been known and used in the past, in \cite{JiSv2}, \cite{AlBrCo}, and \cite{HoWaYa}. Here we prove stronger conclusions on the non-unique solutions, consistent with the claim \eqref{pr9} and part (ii) of Conjecture \ref{IntroConjecture}.

\subsection{Axi-symmetric swirl-free solutions}\label{axisym}  Our main goal in this paper is to present a new class of steady solutions of the system \eqref{pr6}, which are asymptotically homogeneous of degree $-1$, linearly unstable, axially symmetric, and swirl-free. 

We work in cylindrical coordinates $\mathbf{x}=(r\cos\theta,r\sin\theta,z)$ and define the vector fields
\begin{equation}\label{Nav4}
e_r:={}^t(\cos\theta,\sin\theta,0),\qquad e_\theta:={}^t(-\sin\theta,\cos\theta,0),\qquad e_z:={}^t(0,0,1)
\end{equation}
and the functions $U_r$, $U_\theta$, $U_z$ by
\begin{equation}\label{Nav5}
U=U_re_r+U_\theta e_\theta+U_ze_z.
\end{equation}
In the axially symmetric case $\partial_\theta U_r=\partial_\theta U_\theta=\partial_\theta U_z=\partial_\theta P=0$, and the equations \eqref{Nav3} are equivalent to
\begin{equation}\label{Nav7}
\begin{split}
\Big(\widetilde{\Delta}-\frac{1}{r^2}\Big)U_r+\frac{1}{2}\big(r\partial_r+z\partial_z+1\big)U_r-\big[U_r\partial_r+U_z\partial_z\big]U_r+\frac{1}{r}U_\theta^2-\partial_rP&=\partial_tU_r,\\
\Big(\widetilde{\Delta}-\frac{1}{r^2}\Big)U_\theta+\frac{1}{2}\big(r\partial_r+z\partial_z+1\big)U_\theta-\big[U_r\partial_r+U_z\partial_z\big]U_\theta-\frac{1}{r}U_\theta U_r&=\partial_tU_\theta,\\
\widetilde{\Delta} U_z+\frac{1}{2}\big(r\partial_r+z\partial_z+1\big)U_z-\big[U_r\partial_r+U_z\partial_z\big]U_z-\partial_zP&=\partial_tU_z,
\end{split}
\end{equation}
where $\widetilde{\Delta}:=\partial^2_r+(1/r)\partial_r+\partial^2_z$, while the incompressibility condition becomes
\begin{equation}\label{Nav8}
\partial_rU_r+\frac{1}{r}U_r+\partial_z U_z=0.
\end{equation}

We are in fact looking for swirl-free steady solutions, which correspond to $U_\theta\equiv 0$. The variable $U_r$ is not smooth in $r$ due to a coordinate singularity at $r=0$. To work with smooth variables we define
\begin{equation}\label{Nav8.1}
\Omega:=\frac{\partial_zU_r-\partial_rU_z}{r}.
\end{equation}
We can further simplify the equations by introducing the stream function $\Psi$ defined by
\begin{equation}\label{Nav9.1}
\mathcal{L}\Psi=\Omega,\qquad \mathcal{L}:=\partial_r^2+\partial_z^2+\frac{3}{r}\partial_r.
\end{equation}
It follows from \eqref{Nav8}--\eqref{Nav8.1} that
\begin{equation}\label{Nav8.3}
U_r=r\partial_z\Psi,\qquad U_z=-r\partial_r\Psi-2\Psi.
\end{equation}

To summarize, in terms of the new variables $\Psi,\Omega:\R^2\to\R$, the steady self-similar Navier--Stokes system \eqref{Nav7} is equivalent to the system
\begin{equation}\label{sf1}
\begin{split}
&\mathcal{L}\Psi-\Omega=0,\\
&\mathcal{L}\Omega+\frac{1}{2}\big(r\partial_r+z\partial_z+3\big)\Omega-r\partial_z\Psi\partial_r\Omega+r\partial_r\Psi\partial_z\Omega+2\Psi\partial_z\Omega=0.
\end{split}
\end{equation}
We are interested in solutions of this system which are sufficiently smooth in $r,z$ (say $C^2$), even in $r$, decay at suitable rates at infinity, i.e.
\begin{equation*}
\langle r,z\rangle|\Psi(r,z)|+\langle r,z\rangle^3|\Omega(r,z)|\lesssim 1,
\end{equation*}
and are linearly unstable  in the sense that 
\begin{equation}\label{Blt2}
\begin{split}
&\mathcal{L}\wPsi-\wOmega=0,\\
&\mathcal{L}\wOmega+\frac{1}{2}\big(r\partial_r+z\partial_z+3\big)\wOmega+\big\{r\partial_r\Psi\partial_z\wOmega+r\partial_z\Omega\partial_r\wPsi\\
&\qquad\qquad-r\partial_z\Psi\partial_r\wOmega-r\partial_r\Omega\partial_z\wPsi+2\Psi\partial_z\wOmega+2\partial_z\Omega\wPsi\big\}-\lambda \wOmega=0
\end{split}
\end{equation}
for some nontrivial $C^2$ functions $(\wPsi,\wOmega)$ that are even in $r$ and decay at infinity, and some $\lambda\in\C$ with $\Re\lambda>0$. 

For $k\in\Z_+$ let $C^k_{e,o}(\R^2)$ (respectively $C^k_{e,e}(\R^2)$) denote the spaces of $C^k$ functions on $\R^2$ which are even in $r$ and odd in $z$ (respectively even in $r$ and even in $z$). 

Our main numerical result in this paper is the following:

\begin{numresult}\label{MainNumRes} (i) There are approximate global solutions $\Psi, \Omega\in C^2_{e,o}(\R^2)$ and $\wPsi, \wOmega\in C^2_{e,e}(\R^2)$, normalized with $\sup_{r,z\in\R^2}|\wPsi(r,z), \wOmega(r,z)|=1$, and an approximate eigenvalue 
\begin{equation}\label{MainNum1}
\lambda_\ast=0.235059597921
\end{equation}
such that the global pointwise residual bounds
\begin{equation}\label{MainNum2}
\begin{split}
&\|\mathrm{Res}(\Psi)\|_{L^\infty(\R^2)}\leq 3\times 10^{-10},\,\,\qquad \|\mathrm{Res}(\Omega)\|_{L^\infty(\R^2)}\leq 3\times 10^{-10},\\
&\|\mathrm{Res}(\wPsi)\|_{L^\infty(\R^2)}\leq 8\times 10^{-11},\qquad\,\,\|\mathrm{Res}(\wOmega)\|_{L^\infty(\R^2)}\leq 3\times 10^{-10}
\end{split}
\end{equation}
hold in $\R^2$, where $\mathrm{Res}(\Psi)$, $\mathrm{Res}(\Omega)$, $\mathrm{Res}(\wPsi)$, $\mathrm{Res}(\wOmega)$ denote the expressions in the left-hand side of the equations \eqref{sf1}--\eqref{Blt2}.

(ii) The functions $\Psi,\Omega\in C^2_{e,o}(\R^2)$ satisfy the exact identities
\begin{equation}\label{MainNum3}
\begin{split}
&\Psi(r,z)=\frac{\sigma z}{r^2+z^2}+\frac{\sigma^2z(3r^2-7z^2)}{3(r^2+z^2)^3}-\frac{6\sigma z}{(r^2+z^2)^2},\\
&\Omega(r,z)=\frac{-6\sigma z}{(r^2+z^2)^2}+\frac{12\sigma^2z(3z^2-2r^2)+24\sigma z(r^2+z^2)}{(r^2+z^2)^4},
\end{split}
\end{equation}
for any $(r,z)\in\R^2\setminus[-L',L']^2$, where $\sigma=130$ and $L'=751$. The functions $\wPsi,\wOmega\in C^2_{e,e}(\R^2)$ vanish identically in $\R^2\setminus[-L',L']^2$. 

The residuals $\mathrm{Res}(\Psi)$, $\mathrm{Res}(\wPsi)$, and $\mathrm{Res}(\wOmega)$ also vanish in $\R^2\setminus[-L',L']^2$, while
\begin{equation}\label{asympb}
|\mathrm{Res}(\Omega)(r,z)|\leq \frac{500\sigma^3|z|}{(r^2+z^2)^4}\leq 2\times 10^{-11}\qquad\text{ for }(r,z)\in\R^2\setminus[-L',L']^2.
\end{equation}
\end{numresult}

We emphasize that there is a parity change in $z$ of the unstable eigenvector $(\wPsi,\wOmega)$ compared to the base solution $(\Psi,\Omega)$. This is a key feature of the problem, and it is present in the earlier constructions in \cite{GuSv} and \cite{HoWaYa}. See Figures \ref{fig:main_functions} and \ref{fig:velocities} for plots of the main variables $(\Psi,\Omega),(\widetilde{\Psi},\widetilde{\Omega})$ and of the corresponding velocity fields $(U_r, U_z), (\widetilde{U}_r, \widetilde{U}_z)$ restricted to the region $[0,50]^2$. 

\begin{figure}[ht]
\centering
\includegraphics[width=\textwidth]{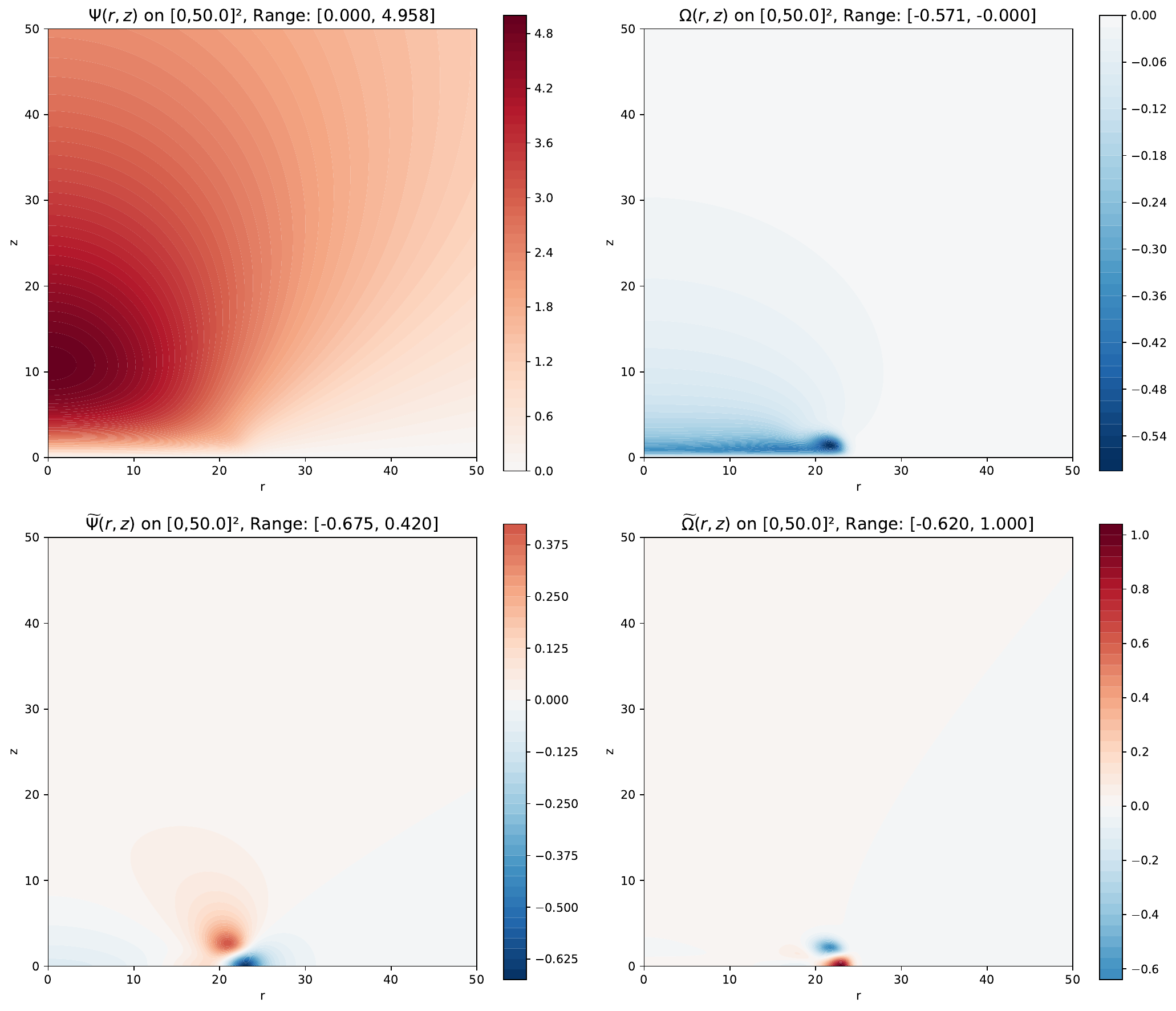}
\caption{Plots of the solutions $(\Psi,\Omega)$ and $(\widetilde{\Psi},\widetilde{\Omega})$ described in the Main Numerical Result \ref{MainNumRes} restricted to domain $(r,z)\in[0,50]^2$.}
\label{fig:main_functions}
\end{figure}

\begin{remark}
Our solutions $(\Psi,\Omega)$, $(\widetilde{\Psi},\widetilde{\Omega})$ are represented as B-splines of degree 10 with about 148,000 degrees of freedom for each variable. We use global $L^\infty$ norms evaluated on dense grids in \eqref{MainNum2} to validate these solutions (see also Figure \ref{fig:global_res} for more detailed residual heatmaps of the solutions $(\Psi,\Omega)$ and $(\widetilde{\Psi},\widetilde{\Omega})$). One should think of these norms as proxy-type norms; other norms, such as $L^2$ norms with various weights, can be used as well. However, verifying pointwise $L^\infty$ bounds on a dense, independent grid provides a particularly rigorous form of validation, as it guarantees that the residuals are uniformly controlled across the entire physical domain. 

One could potentially reduce the residuals further, for example by increasing the number of basis functions of the B-splines, further optimizing the adaptive knot distribution, or using more precise asymptotic expansions (as discussed in subsection \ref{asym_extra}). Nevertheless, we note that the pointwise residuals we obtain here are global in space and are already better than the residuals reported in recent numerical investigations and computer-assisted proofs of similar 2D fluid equations, including the work on the Navier--Stokes equations in \cite{GuSv} and \cite{HoWaYa}, as well as the rigorous numerical bounds for the Euler equations established in \cite{ChenHou2022, ChenHou2023}. Furthermore, our pointwise error bounds also exceed the precision achieved by modern machine-learning-based methods, such as PINNs applied to the Boussinesq equation in \cite{PINN}.
\end{remark}

\begin{figure}[ht]
\centering
\includegraphics[width=\textwidth]{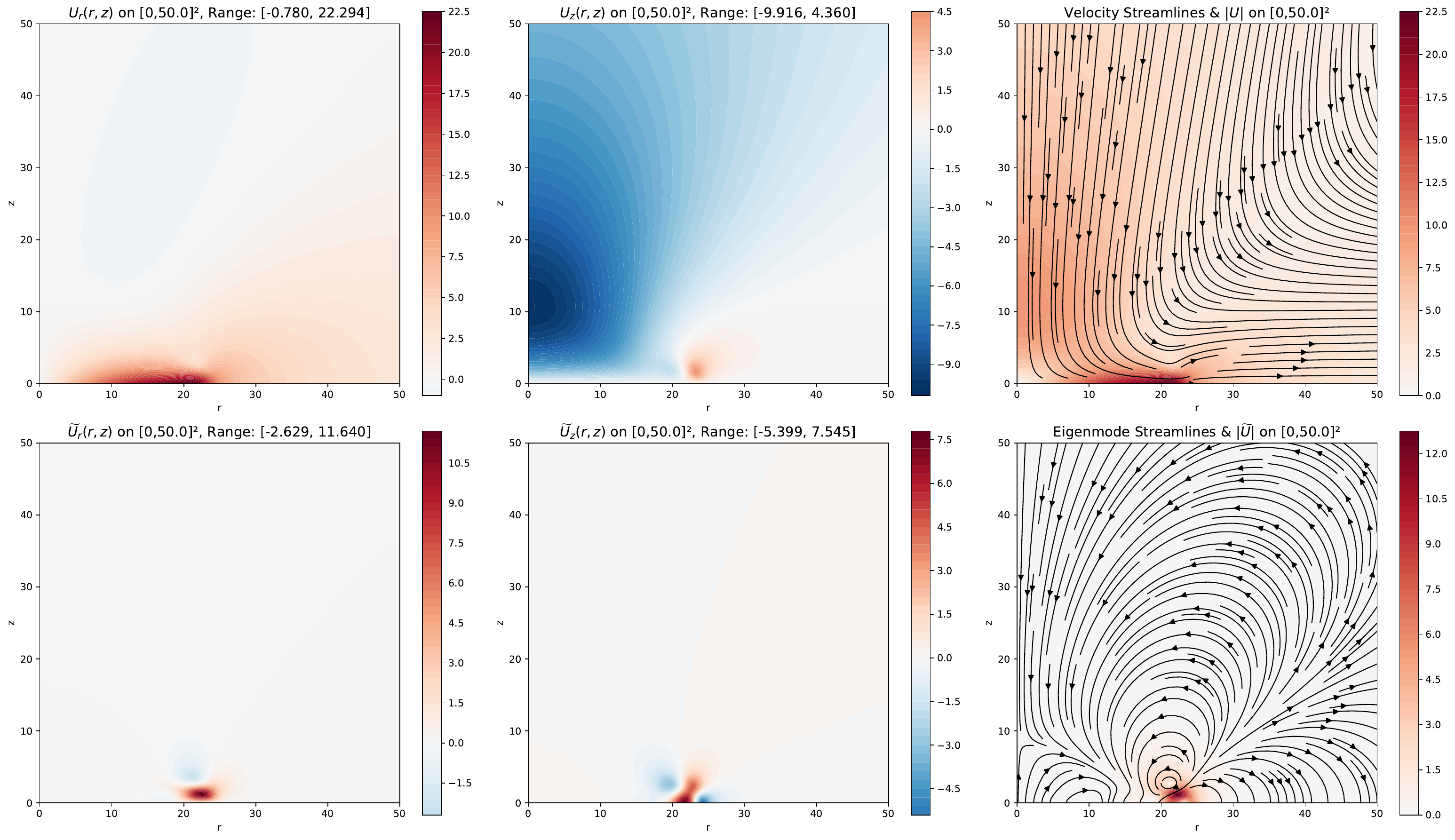}
\caption{Plots of the physical velocity fields $(U_r, U_z)$ and $(\widetilde{U}_r, \widetilde{U}_z)$ corresponding to the solutions $(\Omega,\Psi)$ and $(\widetilde{\Omega}, \widetilde{\Psi})$, restricted to domain $(r,z)\in[0,50]^2$.}
\label{fig:velocities}
\end{figure}

This numerical result strongly indicates that the approximate solutions constructed above can be upgraded to exact solutions. Specifically, the main objective of our program is to prove rigorously that there exist exact solutions $(\Psi,\Omega)\in C^2_{e,o}(\R^2)\times C^2_{e,o}(\R^2)$ and $(\lambda, \wPsi, \wOmega)\in[0.2,0.3]\times C^2_{e,e}\times C^2_{e,e}$ of systems \eqref{sf1} and \eqref{Blt2}
in $\R^2$, with $\sigma=130$ and the precise asymptotics
\begin{equation}\label{pr2}
\begin{split}
&\Psi(r,z)=\frac{\sigma z}{r^2+z^2}+O((r^2+z^2)^{-1}),\qquad \Omega(r,z)=\frac{-6\sigma z}{(r^2+z^2)^2}+O((r^2+z^2)^{-2}),\\
&\wPsi(r,z)=O((r^2+z^2)^{-1}),\qquad \wOmega(r,z)=O((r^2+z^2)^{-2}).
\end{split}
\end{equation}

Our main point is that establishing this will complete the proof of Conjecture \ref{IntroConjecture}, since the hypothesis \eqref{pr6}--\eqref{pr7} of Theorem \ref{MainThm} at the level of the velocity fields follows easily from the existence of these exact vorticity-stream function profiles. We will provide the rigorous, computer-assisted perturbative proof of this statement in the second part of this work.

\subsection{Organization} The rest of this paper is organized as follows: in section \ref{numerical} we describe our numerical construction in detail. In fact, we provide numerical evidence showing that there is a curve of smooth solutions $(\Psi,\Omega)=(\Psi_\sigma, \Omega_\sigma)$ of the system \eqref{sf1}, even in $r$ and odd in $z$, with the simple asymptotics at $\infty$
\begin{equation}\label{sf2}
\Psi(r,z)=\frac{\sigma z}{r^2+z^2}+O((r^2+z^2)^{-1}),\qquad \Omega(r,z)=\frac{-6\sigma z}{(r^2+z^2)^2}+O((r^2+z^2)^{-2}),
\end{equation}
where $\sigma\in\R$ is a parameter, which become unstable for $\sigma$ sufficiently large. The solutions we describe above correspond to the specific value $\sigma=130$ and are refined with the more precise asymptotic conditions \eqref{MainNum3}, which is needed in order to get better global residuals in \eqref{MainNum2}.

In section \ref{thimpl} we provide a proof of Theorem \ref{MainThm},  thus showing that the existence and regularity of exact unstable solutions of the system \eqref{sf1}--\eqref{Blt2} would lead to the definitive non-uniqueness result stated in Conjecture \ref{IntroConjecture}.

In section~\ref{sec:4} we discuss the stability of our numerical construction, as well as the derivation of the refined asymptotic expansion \eqref{MainNum3} at infinity.

\subsection{Acknowledgements}

A. D. I. was supported in part by NSF-FRG grant DMS-2245228 and by NSF-UEFISCDI grant DMS-2407694. S. P. was supported by NSF Grant No. DMS-2424441. H. J. was supported in part by NSF-FRG grant DMS-2245021 and NSF grant DMS-2453270. 

The authors acknowledge that this work was performed using the Princeton Research Computing resources, a consortium including the Princeton Institute for Computational Science and Engineering (PICSciE) and Research Computing at Princeton University. The authors would also like to thank Javier G\'{o}mez-Serrano for useful discussions related to this work. 

Finally, the authors acknowledge the use of AI assistants to help draft portions of the Python codes used for the numerical simulations. The numerical analysis, including the Python codes, the changes of variables, the analytical Jacobian formulations, the specialized collocation grids, and the dense-grid validation, were designed and rigorously verified by the authors.

\section{Numerical construction and results}\label{numerical}

All Python codes used to perform the numerical constructions, eigenvalue tracking, and residual validations described in this section are publicly available on GitHub at
\begin{center}
\url{https://github.com/alexionescu2/Leray_Uniqueness}.
\end{center}

The main idea of the construction of the solutions $(\Psi,\Omega)$ and $(\lambda,\wPsi, \wOmega)$ in the Main Numerical Result \ref{MainNumRes} is to first construct precise solutions in a large square $(r,z)\in [-L,L]^2$ and then glue them using cutoff functions to their asymptotic data. There are two main issues to keep in mind, both of them having direct implications on the residual bounds \eqref{MainNum2}: (1) the numerical solutions in the compact set $[-L,L]^2$ have to satisfy the parity conditions exactly and have small residuals, and (2) the square $[-L,L]^2$ has to be large enough in order to lead to small residuals in the gluing region. We proceed in several steps:

\subsection{Step 1: renormalization} To construct precise global solutions with prescribed asymptotic states we need to construct precise numerical solutions in very large Euclidean domains. One could, in principle, compactify $\R^2$ by a change of variables, and reduce the problem to the unit square (or the unit ball), but the Laplacian becomes a degenerate operator in compactified coordinates, leading to numerical instabilities. 

We approach this issue differently: we make changes of variables of the form $u\approx (1+r^2)^{1/p}$, $v\approx (1+z^2)^{1/p}$, for suitable exponents $p\geq 2$, and construct first numerical solutions in the $(u,v)$ space. We use the change of variables to also encode the important symmetries of the nonlinear solutions $(\Psi,\Omega)$ (even in $r$ and odd in $z$) and of the unstable eigenfunctions $(\wPsi, \wOmega)$ (even in $r$ and even in $z$). Precisely, we define 
\begin{equation}\label{var6}
\begin{split}
&\Psi(r,z):=z\Psi_2(u,v),\qquad \Omega(r,z):=z\Omega_2(u,v),\\
&\wPsi(r,z):=\wPsi_2(u,v),\qquad\,\,\,\,\wOmega(r,z):=\wOmega_2(u,v),\\
& u=f(r^2), \qquad v=f(z^2),
\end{split}
\end{equation}
where $p\in\{4,5,6\}$ and 
\begin{equation}\label{var14}
f(x)=(1+12x)^{1/p}-1.
\end{equation}

\begin{figure}[ht]
\centering
\includegraphics[width=0.8\textwidth]{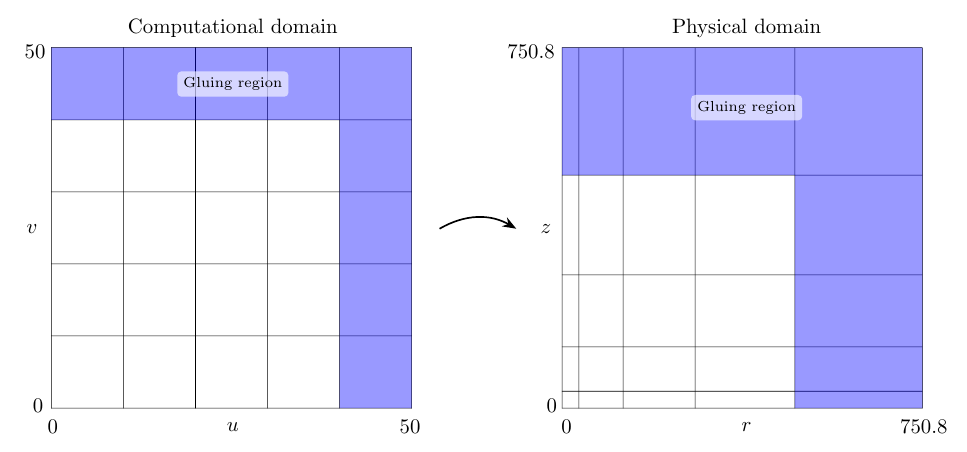}
\caption{The coordinate mapping $(u,v)\mapsto(r,z)$ for $(A,p)=(50,4)$. Left: a uniform grid in the computational domain $(u,v)\in[0,50]^2$. Right: the image of the same grid lines in the physical domain $(r,z)$, showing the concentration of resolution near the origin.}
\label{fig:coordmap}
\end{figure}

In terms of the new variables, the equations \eqref{sf1} are equivalent to the system
\begin{equation}\label{var17}
\begin{split}
&\mathcal{L}_{uv}\Psi_2-\Omega_2=0,\\
&\mathcal{L}_{uv}\Omega_2+\big\{K(u)\partial_u+K(v)\partial_v+2\big\}\Omega_2+2\big\{2K(u)K(v)\big(\partial_u\Psi_2\partial_v\Omega_2-\partial_v\Psi_2\partial_u\Omega_2\big)\\
&\qquad\qquad-K(u)(\Psi_2\partial_u\Omega_2-\Omega_2\partial_u\Psi_2)+2K(v)\Psi_2\partial_v\Omega_2+\Psi_2\Omega_2\big\}=0
\end{split}
\end{equation}
where
\begin{equation}\label{var16}
\begin{split}
&\mathcal{L}_{uv}:=\frac{48}{p}\Big\{\frac{K(u)}{(u+1)^{p-1}}\partial_u^2+\frac{(p+1)(u+1)^p+p-1}{p(u+1)^{2p-1}}\partial_u\\
&\qquad\qquad\qquad+\frac{K(v)}{(v+1)^{p-1}}\partial_v^2+\frac{(p/2+1)(v+1)^p+p-1}{p(v+1)^{2p-1}}\partial_v\Big\},\\
&K(x):=\frac{(x+1)^p-1}{p(x+1)^{p-1}}.
\end{split}
\end{equation}

We think of the functions $\Psi_2,\Omega_2$ as defined in large computational domains $(u,v)\in[0,A]^2$ and impose the boundary conditions (consistent with \eqref{sf2})
\begin{equation}\label{var13}
\Psi_2(u,v)=\frac{\sigma}{U_0(u)+U_0(v)},\qquad\Omega_2(u,v)=\frac{-6\sigma}{(U_0(u)+U_0(v))^2}\qquad\text{ if }v=A\text{ or }u=A,
\end{equation}
where $U_0(u)=[(u+1)^p-1]/12$. There are no boundary conditions on the other two sides of the square $[0,A]^2$, corresponding to $v=0$ or $u=0$, due to the choice of the variables \eqref{var6}. The parameter $\sigma$ is the continuation parameter that increases from $\sigma=0$ to $\sigma=160$. 

Using the changes of variables \eqref{var6} the linearized system \eqref{Blt2} becomes
\begin{equation}\label{Blt12}
\begin{split}
&\mathcal{L}'_{uv}\wPsi_2-\wOmega_2=0,\\
&\mathcal{L}'_{uv}\wOmega_2+\big(K(u)\partial_u+K(v)\partial_v+3/2\big)\wOmega_2+2\big\{2K(u)K(v)\big(\partial_u\Psi_2\partial_v\wOmega_2-\partial_v\Psi_2\partial_u\wOmega_2\big)\\
&\qquad-2K(u)K(v)\big(\partial_u\Omega_2\partial_v\wPsi_2-\partial_v\Omega_2\partial_u\wPsi_2\big)-K(u)(\Psi_2\partial_u\wOmega_2-\Omega_2\partial_u\wPsi_2)\\
&\qquad+2K(v)\big(\Psi_2\partial_v\wOmega_2+\partial_v\Omega_2\wPsi_2\big)+\Omega_2\wPsi_2\big\}-\lambda\wOmega_2=0,
\end{split}
\end{equation}
where $\lambda$ is the generalized eigenvalue to be found and
\begin{equation}\label{var16.2}
\begin{split}
&\mathcal{L}'_{uv}:=\frac{48}{p}\Big\{\frac{K(u)}{(u+1)^{p-1}}\partial_u^2+\frac{(p+1)(u+1)^p+p-1}{p(u+1)^{2p-1}}\partial_u\\
&\qquad\qquad\qquad+\frac{K(v)}{(v+1)^{p-1}}\partial_v^2+\frac{(1-p/2)(v+1)^p+p-1}{p(v+1)^{2p-1}}\partial_v\Big\}.
\end{split}
\end{equation}
Notice that there are small differences between the Laplacians $\mathcal{L}_{uv}$ in \eqref{var16} and $\mathcal{L}'_{uv}$ in \eqref{var16.2}, due to the difference in the changes of variables used for the nonlinear solutions and for the generalized eigenfunctions (coming mainly from the parity change).  

To summarize, we are first constructing high-precision solutions $(\Psi_2,\Omega_2)$ for the system \eqref{var17} and $(\lambda,\wPsi_2,\wOmega_2)$ for the system \eqref{Blt12} on squares $(u,v)\in[0,A]^2$. The changes of variables \eqref{var6} serve two main purposes: (1) they encode the desired symmetries of the functions $(\Psi,\Omega, \wPsi,\wOmega)$, and (2) they compress very large physical domains of the form $[-L,L]$, $L\approx 1000$, to computational domains of moderate size, for example $(A,p)=(50,4)$ or $(A,p)=(25,5)$ or $(A,p)=(15,6)$.

\subsection{Step 2: $\sigma$-continuation} For the sake of concreteness, we describe our analysis in the case $(A,p)=(50,4)$, which corresponds to the square domain $(r,z)\in[-L,L]^2$ in the physical space, where $L=\sqrt{[(A+1)^4-1]/12}\approx 750.8$. Similar results hold for other choices of the parameters $A$ and $p$, such as $(A,p)=(15,6)$ or $(A,p)=(25,4)$ or $(A,p)=(25,5)$.
\smallskip

{\bf{(1) Base state construction.}} We are searching for solutions $(\Psi_2,\Omega_2)=(\sigma/\kappa)(\Psi'_2,\Omega'_2)$ of the system \eqref{var17}--\eqref{var13}, where we fix the parameter $\kappa:=150$ mainly to work with unit-size fields throughout the $\sigma$-continuation. We initialize the numerical construction by solving the renormalized system at $\sigma=0$, in which case the system is linear. The unknowns $\Psi'_2$ and $\Omega'_2$ are represented using tensor products of  B-splines of degree 4 defined on a uniform grid in the computational domain $(u,v)\in[0,A]^2$. We use $N=400$ basis functions in each coordinate direction, resulting in a total problem size of $2N^2$. The discretized equations are enforced at the collocation points (the Greville abscissae) associated with the knot vectors.

\begin{figure}[ht]
\centering
\includegraphics[width=1.01\textwidth]{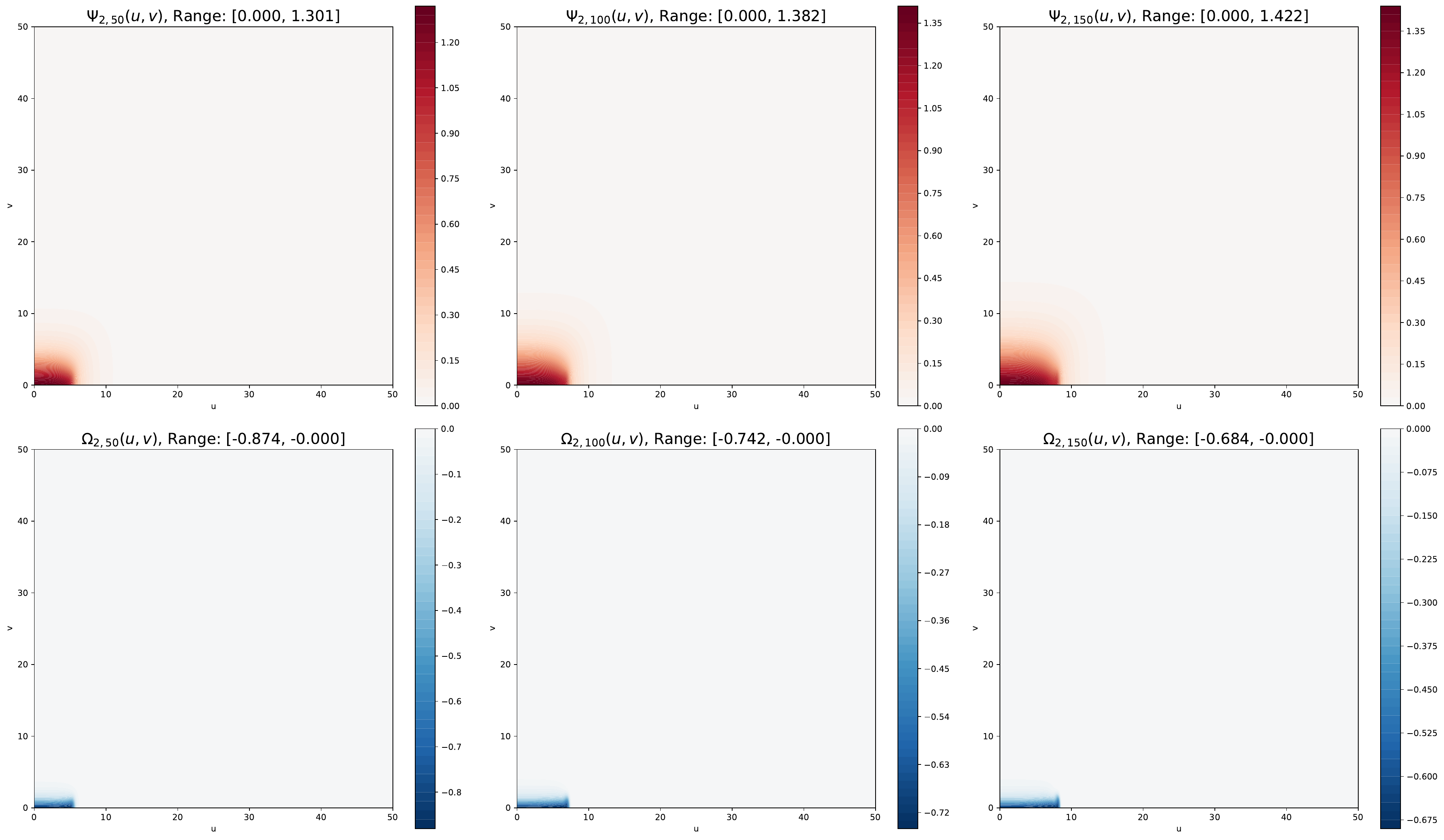}
\caption{Evolution of the nonlinear solution $(\sigma/\kappa)(\Psi'_2,\Omega'_2)$ on the computational domain $(u,v)\in[0,50]^2$, plotted for three values $\sigma=50,100,150$.}
\label{fig:evolution}
\end{figure}

We then employ natural parameter continuation with a linear secant predictor to advance in $\sigma$. The predictor extrapolates the initial guess for step $n+1$ using the tangent approximation derived from solutions at steps $n$ and $n-1$. At each step, the nonlinear system is solved using a Newton-Krylov method (GMRES). The solver is preconditioned by a sparse direct solver (SuperLU) applied to the Jacobian matrix, which is computed at the beginning of the step and lagged for subsequent inner iterations. The continuation step size $\delta\sigma$ adapts dynamically, starting from $\delta\sigma=0.2$ at $\sigma=0$ and increasing to $\delta\sigma=2$ for $\sigma\geq 20$.

The continuation proceeds smoothly and the solution converges within 2--4 Newton iterations at every step, achieving a residual $L^\infty$ norm below $10^{-10}$ as evaluated at the collocation points of the spline. See Figure \ref{fig:evolution}.
\smallskip

{\bf{(2) Stability analysis.}} To discover instability, for any $\sigma\geq 80$ we seek generalized eigenvalues $\lambda$ and eigenvectors $(\widetilde{\Psi}_2, \widetilde{\Omega}_2)$ of the linearized system of equations \eqref{Blt12} with $0$ boundary data on the far field boundaries $\{v=A\}$ and $\{u=A\}$. 
The perturbation functions $\wPsi_2, \wOmega_2$ are expanded in the same tensor product basis of quartic B-splines used for the base state.

\begin{figure}[ht]
\centering
\includegraphics[width=0.8\textwidth]{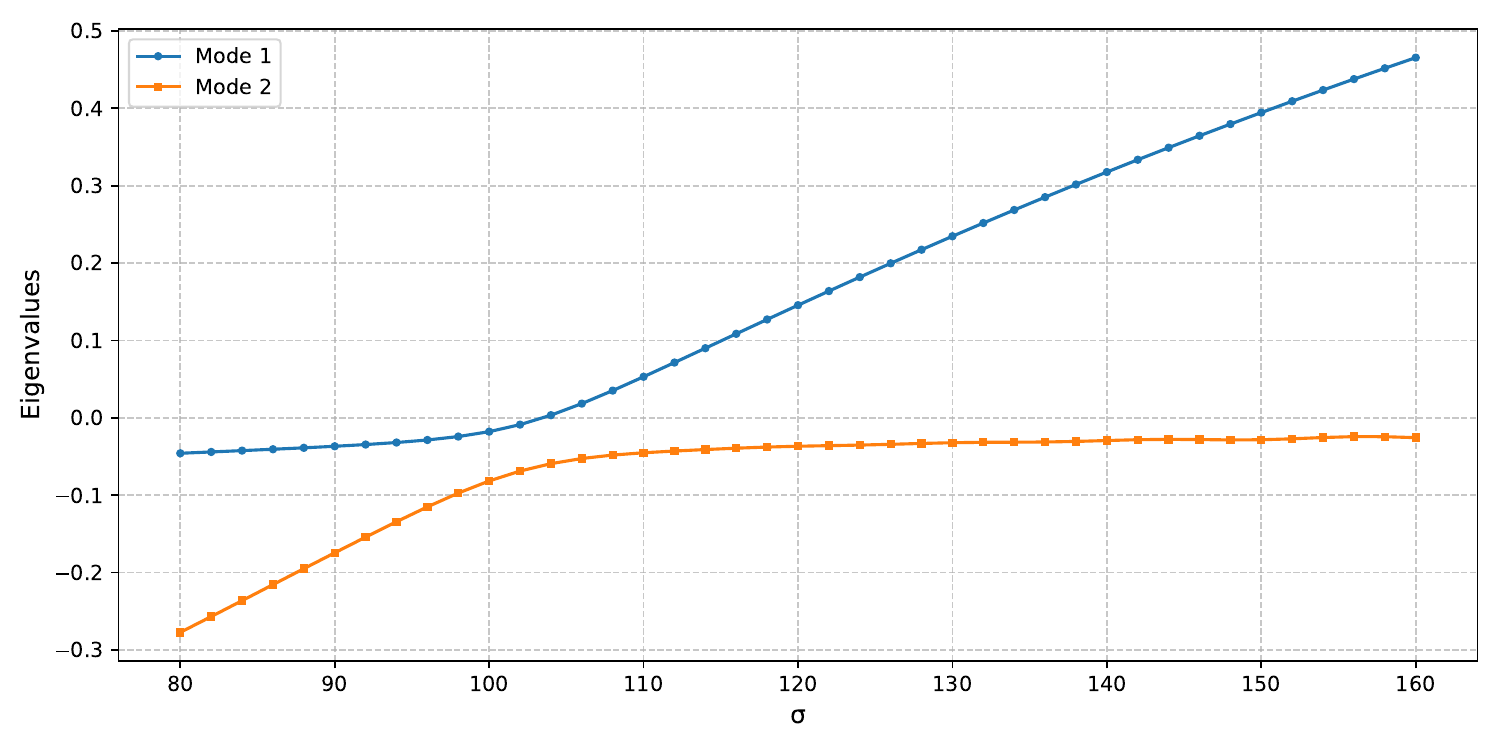}
\caption{The two highest eigenvalues detected by the solver during the $\sigma$-continuation. Both curves represent real single eigenvalues.}
\label{fig:2eig}
\end{figure}

The generalized eigenvalue problem is solved using the Implicitly Restarted Arnoldi Method (IRAM) via the ARPACK library, employing a shift-invert spectral transformation. At each continuation step, the solver attempts to locate eigenvalues using shifts predicted from the leading eigenvalue of the previous step. We strictly filter the computed spectra to retain only non-oscillatory modes with $|\Im\lambda|\ll 1$ that satisfy a residual tolerance of less than $10^{-10}$.

There are two types of eigenfunctions of the original linearized system \eqref{Blt2}. The important ones are those that are even in $z$, which we generate by solving the system \eqref{Blt12} in the square $[0,A]^2$.
This is the case that leads to unstable modes, since the leading eigenvalue $\lambda_\ast$ of the system \eqref{Blt12} crosses the imaginary axis during the $\sigma$-continuation. Indeed, we observe numerically that one single real eigenvalue becomes positive (unstable) at  $\sigma\approx 103$, and grows steadily as $\sigma$ increases towards $160$. The eigenvalue solver detects one other real eigenvalue that increases towards $0$, but never crosses the axis before $\sigma$ reaches $160$. See Figure \ref{fig:2eig}.

\subsection{Step 3: refinement}
In the next stage, we fix $\sigma=130$, a value for which we found numerically that the base solution is linearly unstable, and seek high-precision numerical solutions $(\Psi_2,\Omega_2)$ of the nonlinear problem \eqref{var17} and $(\lambda_\ast, \widetilde{\Psi}_2, \widetilde{\Omega}_2)$ of the eigenvalue problem \eqref{Blt12}.

We initialize the computation using the solution $(\Psi_2,\Omega_2)$ obtained from the continuation phase. This predictor is interpolated onto a high-order tensor product B-spline basis of degree 10. To resolve the fine spatial structures of the flow, we employ a non-uniform knot distribution defined by a multi-zone density function. The grid density is highest in the near-field region $u\in[0,10], v\in[0,4]$ and transitions smoothly to a coarser resolution in the far field, ensuring spectral-like accuracy where gradients are steepest. See Figure \ref{fig:knots}.

\begin{figure}[!t]
\centering
\includegraphics[width=0.8\textwidth]{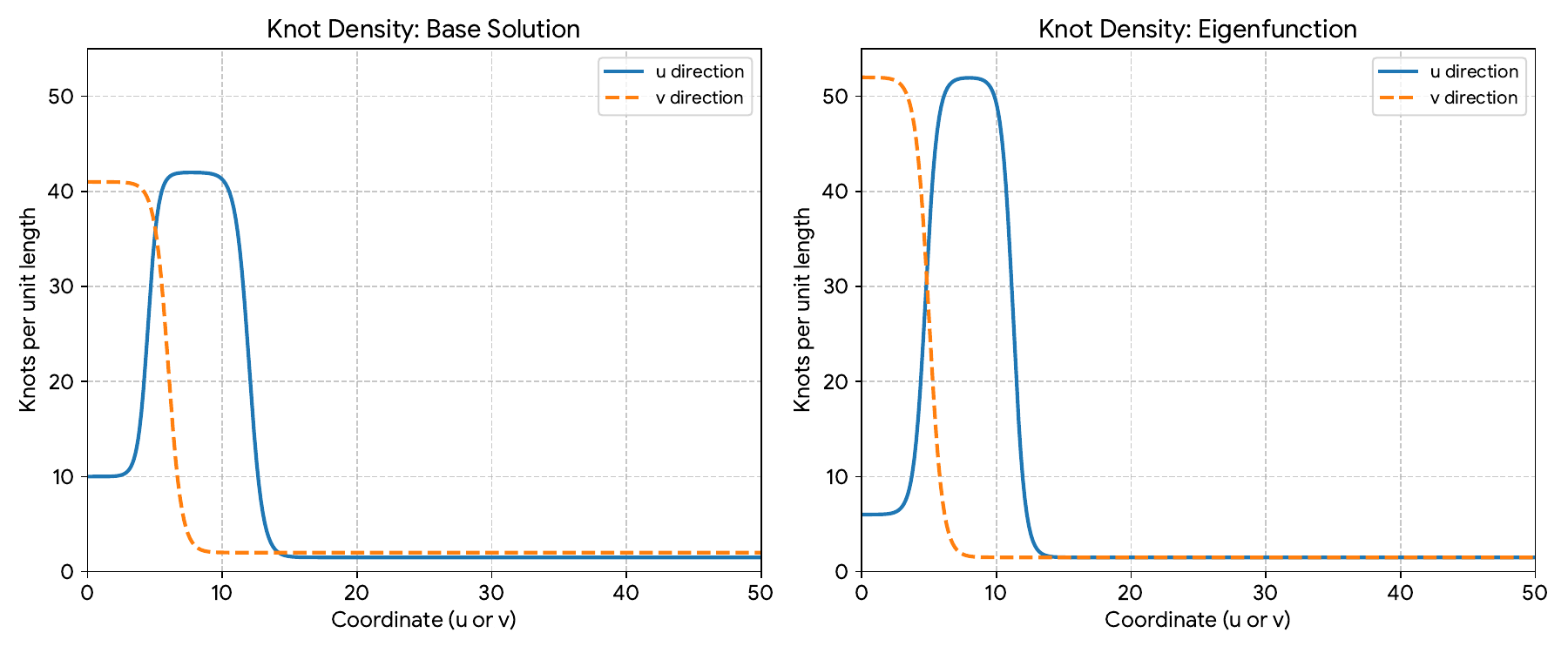}
\caption{Knot density (knots per unit of $u$ or $v$) of the splines used in the refinement stage, showing an adaptive multi-zone grid of degree-10 B-splines, concentrating up to $\sim 50$ knots per unit in the regions where the solution gradients are steepest, and tapering to $\sim 2$ knots per unit in the far field.}
\label{fig:knots}
\end{figure}

We then solve the system \eqref{var17}, subject to improved asymptotic boundary conditions on the far-field boundaries. The improved boundary conditions are obtained from a more precise formal expansion of the solution at infinity in the physical $(r,z)$ space (see subsection \ref{numexp} for details). In the $(u,v)$ computational domain these conditions are $(\Psi_2,\Omega_2)=(\Psi_2^a,\Omega_2^a)$ on the far-field boundaries $\{v=A\}$ and $\{u=A\}$, where
\begin{equation}\label{var13ref}
\begin{split}
&\Psi^a_2(u,v)=\frac{\sigma}{U_0(u)+U_0(v)}+\frac{\sigma^2(3U_0(u)-7U_0(v))}{3(U_0(u)+U_0(v))^3}-\frac{6\sigma }{(U_0(u)+U_0(v))^2},\\
&\Omega^a_2(u,v)=\frac{-6\sigma}{(U_0(u)+U_0(v))^2}+\frac{12\sigma^2(3U_0(v)-2U_0(u))+24\sigma (U_0(u)+U_0(v))}{(U_0(u)+U_0(v))^4}.
\end{split}
\end{equation}
It is important to use these improved asymptotics at this stage, instead of the more basic asymptotics \eqref{var13}, in order to get better matching of the numerical solution and the asymptotic data, thus reducing significantly the gluing error of the global solutions. 

The nonlinear system is solved using a modified Newton-Raphson method: we construct the exact sparse Jacobian matrix and compute its LU factorization using a direct sparse solver (SuperLU). The result is a high-precision solution of the system \eqref{var17} with a residual norm below $3 \times 10^{-11}$ at the collocation points.

Finally, we perform the stability analysis on this refined state. Using the converged high-precision solution $(\Psi_2,\Omega_2)$, we construct the linearized operator on the same degree-10 spline basis and solve the generalized eigenvalue problem using the Implicitly Restarted Arnoldi Method (IRAM). We target the leading unstable mode using a shift of $\rho=0.23$, as estimated in the first stage by the eigenvalue solver. The solver converges to the unstable eigenvalue 
\begin{equation}\label{eigennum}
\lambda_\ast=0.235059597921
\end{equation}
and the computed eigenpair satisfies the discretized equations to a high degree of accuracy, with a residual norm of less than $3\times 10^{-12}$ at the collocation points. The solution is validated using dense residuals: the true eigenpair has a dense residual $\approx 10^{-10}$ while all the other eigenpairs returned by the solver are spurious and have dense residuals $\approx 10^{-2}$.

\subsection{Step 4: conclusions} As a result of this procedure we produce approximate numerical solutions $(\Psi_2,\Omega_2)$ and $(\lambda_\ast,\wPsi_2,\wOmega_2)$ of the systems \eqref{var17} and \eqref{Blt12} respectively, on the computational domain $(u,v)\in[0,50]^2$. The main point is that the residuals of these solutions  are uniformly small in the square $[0,50]^2$ (not just at the collocation points),
\begin{equation}\label{resid1}
\begin{split}
&\|\mathrm{Res}(\Psi_2)\|_{L^\infty([0,50]^2)}\leq 9\times 10^{-12},\qquad \|\mathrm{Res}(\Omega_2)\|_{L^\infty([0,50]^2)}\leq 5\times 10^{-10},\\
&\|\mathrm{Res}(\wPsi_2)\|_{L^\infty([0,50]^2)}\leq 8\times 10^{-11},\qquad \|\mathrm{Res}(\wOmega_2)\|_{L^\infty([0,50]^2)}\leq 3\times 10^{-10},\\
\end{split}
\end{equation}
where the residuals denote the expressions in the left-hand sides of \eqref{var17} and \eqref{Blt12}, and the $L^\infty$ norms are estimated by evaluating these residuals on a uniform dense $6000\times 6000$ grid in the $(u,v)$ space.

Moreover, we can verify numerically that the solutions $(\Psi_2,\Omega_2)$ are well matched to their asymptotic states $\Psi_2^a$ and $\Omega_2^a$ defined in \eqref{var13ref}, i.e.
\begin{equation}\label{resid2}
\begin{split}
&\|\Psi_2-\Psi_2^a\|_{L^{\infty}([0,50]^2\setminus [0,40]^2)}\leq 1.2\times 10^{-8}, \quad \|\Omega_2-\Omega_2^a\|_{L^{\infty}([0,50]^2\setminus [0,40]^2)}\leq 1.4\times 10^{-13},\\
&\|\wPsi_2\|_{L^{\infty}([0,50]^2\setminus [0,40]^2)}\leq 3.2\times 10^{-8}, \qquad\quad\,\,\,\|\wOmega_2\|_{L^{\infty}([0,50]^2\setminus [0,40]^2)}\leq 3.5\times 10^{-13}.
\end{split}
\end{equation}

\begin{figure}[ht]
\centering
\includegraphics[width=\textwidth]{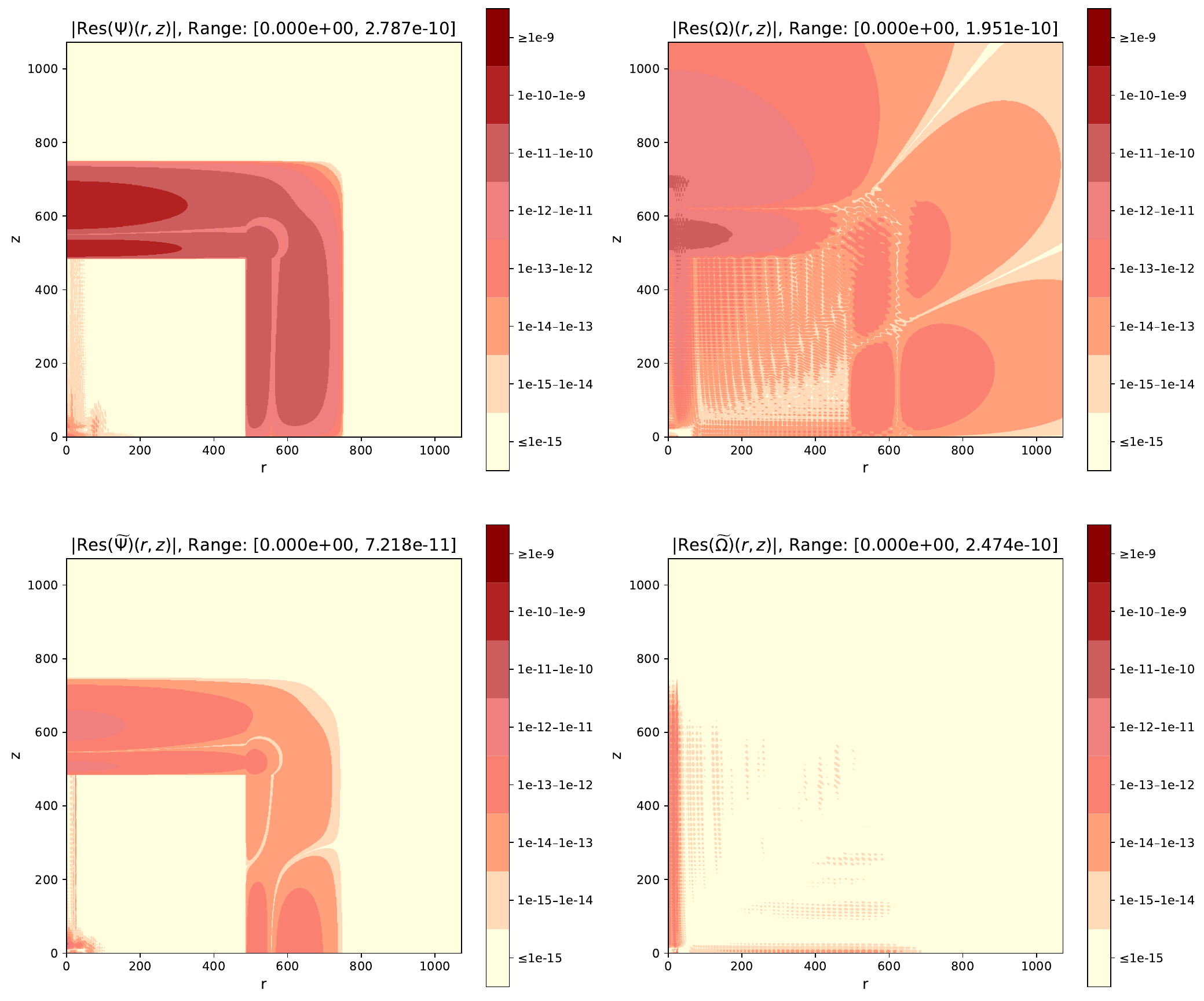}
\caption{Residual heatmaps of the solutions $\Psi$, $\Omega$, $\widetilde{\Psi}$, $\widetilde{\Omega}$ in the physical space $\R^2$ (compare with the main bounds \eqref{MainNum2} and \eqref{asympb}). Notice that the residuals of the stream functions $\Psi, \widetilde{\Psi}$ are more significant in the gluing region, while the residuals of the vorticities $\Omega, \widetilde{\Omega}$ are largest near the point $(r,z)=(26,0)$. The residuals in the gluing region can be further reduced by improving numerically the ansatz in the asymptotic region (as explained at the end of subsection \ref{numexp}), but we do not pursue such improvements in this paper.}
\label{fig:global_res}
\end{figure}

We can thus define the global solutions $\Psi_2^\ast, \Omega_2^\ast, \wPsi_2^\ast, \wOmega_2^\ast:[0,\infty)^2\to\R$ by the gluing formulas 
\begin{equation}\label{glue}
\begin{split}
&\Psi^\ast_2(u,v):=[1-\phi(u)\phi(v)]\Psi^a_2(u,v)+\phi(u)\phi(v)\Psi_2(u,v),\\
&\Omega^\ast_2(u,v):=[1-\phi(u)\phi(v)]\Omega^a_2(u,v)+\phi(u)\phi(v)\Omega_2(u,v),\\
&\wPsi^\ast_2(u,v):=\phi(u)\phi(v)\wPsi_2(u,v),\qquad\wOmega^\ast_2(u,v):=\phi(u)\phi(v)\wOmega_2(u,v),
\end{split}
\end{equation}
where $\phi:\R\to[0,1]$ is a cutoff function equal to $1$ in the interval $[0,A-\rho]$ and equal to $0$ in the interval $[A,\infty)$. In our case $A=50$, $\rho=10$, and we define
\begin{equation}\label{gluephi}
\begin{split}
&\phi(x):=\phi_0((x-A)/\rho),\qquad x\in[A-\rho, A].\\
&\phi_0(y):=1-30\int_{-1}^y z^2(z+1)^2\,dz=-6y^5-15y^4-10y^3,\qquad y\in[-1,0].
\end{split}
\end{equation}
The residuals in the gluing region $\mathcal{R}:=[0,A]^2\setminus[0,A-\rho]^2$ can be estimated numerically by
\begin{equation}\label{resid6}
\begin{split}
&\|\mathrm{Res}(\Psi^\ast_2)\|_{L^\infty(\mathcal{R})}\leq 6\times 10^{-13},\qquad \|\mathrm{Res}(\Omega^\ast_2)\|_{L^\infty(\mathcal{R})}\leq 7\times 10^{-14},\\
&\|\mathrm{Res}(\wPsi^\ast_2)\|_{L^\infty(\mathcal{R})}\leq 2\times 10^{-12},\qquad \|\mathrm{Res}(\wOmega^\ast_2)\|_{L^\infty(\mathcal{R})}\leq 4\times 10^{-11}.
\end{split}
\end{equation}

Finally, we pass to the physical space and define the global solutions $\Psi,\Omega,\wPsi, \wOmega:\R^2\to\R$ by the formulas $(\Psi(r,z),\Omega(r,z))=(z\Psi_2^\ast(u,v),z\Omega_2^\ast(u,v))$, $(\wPsi(r,z),\wOmega(r,z))=(\wPsi_2^\ast(u,v),\wOmega_2^\ast(u,v))$; see \eqref{var6}. This leads to the solutions described in our Main Numerical Result \ref{MainNumRes}. The conclusions stated in part (ii) (the parity properties of the solutions and their asymptotic behavior) follow (rigorously) directly from the definitions. The main numerical conclusions in part (i) are the residual bounds \eqref{MainNum2}. To verify them we notice that 
\begin{equation}\label{resid7}
\begin{split}
&\mathrm{Res}(\Psi)(r,z)=z\mathrm{Res}(\Psi_2^\ast)(u,v),\qquad \mathrm{Res}(\Omega)(r,z)=z\mathrm{Res}(\Omega_2^\ast)(u,v),\\
&\mathrm{Res}(\wPsi)(r,z)=\mathrm{Res}(\wPsi_2^\ast)(u,v),\qquad\,\,\,\mathrm{Res}(\wOmega)(r,z)=\mathrm{Res}(\wOmega_2^\ast)(u,v),
\end{split}
\end{equation}
as a consequence of the definitions. We can use these formulas to estimate the residuals of the functions $\Psi, \Omega, \wPsi, \wOmega$ in the interior domain, using the residuals of the functions $\Psi_2^\ast, \Omega_2^\ast, \wPsi_2^\ast, \wOmega_2^\ast$ estimated earlier in the computational domain. The bounds in the exterior domain are better, as described in part (ii).

\section{Proof of Theorem \ref{MainThm}}\label{thimpl}

In this section we recall some results from \cite{JiSv1, JiSv2} and provide the proof of Theorem \ref{MainThm}. Define the space of divergence free vector fields 
\begin{equation}\label{pot0}
  \begin{split}
  &X:=\big\{\phi\in L^2\cap L^4(\R^3):\,{\rm div}\,\phi=0\big\},\qquad \mathcal{D}:= \big\{\phi\in X:\,x\cdot\nabla\phi\in X,\,\, \Delta\phi \in X\big\},\\
  &Y:=\Big\{U\in C^2(\R^3):\,{\rm div}\,U=0, \quad \sup_{x\in\R^3, \,|\alpha|=0,1,2}\langle x\rangle^{|\alpha|+1} |\nabla^{\alpha}U(x)|<\infty\Big\},
  \end{split}
\end{equation}
with the corresponding norms 
\begin{equation}\label{pot0.1}
\begin{split}
    &\|\phi\|_X:=\|\phi\|_{L^2\cap L^4(\R^3)}, \quad \|\phi\|_\mathcal{D}:=\|\phi\|_X + \|\nabla^2\phi\|_X + \|x\cdot\nabla\phi\|_X, \\
    &\|U\|_Y:= \sup_{x\in\R^3, \,|\alpha|\leq 2}\Big[\langle x\rangle^{|\alpha|+1}|\nabla^{\alpha}U(x)|\Big]. 
\end{split}    
\end{equation}
  For $U\in Y$, we introduce the linearized operator $\mathcal{L}_{U}:\mathcal{D}\subseteq X\to X$ as follows
  \begin{equation}\label{pot1}
      \mathcal{L}_{U}\phi:=\Delta \phi+\frac{1}{2}\mathbf{x}\cdot\nabla \phi+\frac{1}{2}\phi-\Pi(U\cdot\nabla \phi+\phi\cdot\nabla U),
  \end{equation}
  for any $\phi\in\mathcal{D}$, where $\Pi$ denotes the Leray projection. For any $\phi\in\mathcal{D}$, we also define $\mathcal{L}_0, K_U: \mathcal{D}\subseteq X\to X$ by
  $$\mathcal{L}_0\phi:=\Delta \phi+\frac{1}{2}\mathbf{x}\cdot\nabla \phi+\frac{1}{2}\phi, \qquad K_U\phi:=-\Pi(U\cdot\nabla \phi+\phi\cdot\nabla U).$$

The operator $\mathcal{L}_U$ generates a continuous semigroup $e^{t\mathcal{L}_U}$ on $X$ for $t\ge0$, which can be seen as follows. Define for $x\in\R^3,\,\,t\ge0$,
  \begin{equation}\label{pot1.01}
      h(x,t):=\frac{1}{\sqrt{t+1}}\phi\Big(\frac{x}{\sqrt{t+1}}, \log(t+1)\Big), \quad v(x,t):=\frac{1}{\sqrt{t+1}}U\Big(\frac{x}{\sqrt{t+1}},\log(t+1)\Big).
  \end{equation}
  Then $\phi(s)\in \mathcal{D}\subseteq X$ solving
  \begin{equation}\label{pot1.02}
      \partial_s\phi=\mathcal{L}_U\phi
  \end{equation}
  for $s\geq 0$ with initial data $\phi_0\in X$, is equivalent to $h$ solving the linear Stokes equation (with lower order terms)
  \begin{equation}\label{pot1.001}
      \partial_th-\Delta h + \Pi(v\cdot\nabla h+ h\cdot\nabla v)=0,\quad {\rm div\,}h=0,
  \end{equation}
  for $t\geq 0$ with the same initial data. 

  Similarly, there is a correspondence on the level of resolvents. Consider the resolvent equation for $\lambda:=\alpha+i\beta\in\C$ with $\alpha, \beta\in\R$ and $\alpha>-1/4$,
  \begin{equation}\label{pot2}
      \Delta \phi+\frac{1}{2}\mathbf{x}\cdot\nabla \phi+\frac{1}{2}\phi -\Pi(U\cdot\nabla \phi+\phi\cdot\nabla U)-\lambda\phi=F,
  \end{equation}
  for some $F\in X$. Set
  \begin{equation}\label{pot3}
      u(x,t):=t^{-\frac{1}{2}+\alpha+i\beta }\phi\Big(\frac{x}{\sqrt{t}}\Big),\quad v(x,t)=t^{-\frac{1}{2}}U\Big(\frac{x}{\sqrt{t}}\Big),\qquad f(x,t):=t^{-\frac{3}{2}+\alpha+i\beta}F\Big(\frac{x}{\sqrt{t}}\Big). 
  \end{equation}
  It follows from \eqref{pot2} that
  \begin{equation}\label{pot4}
      \partial_tu-\Delta u + \Pi(v\cdot\nabla u + u\cdot\nabla v) = -f(x,t),\quad {\rm div\,}u=0,
  \end{equation}
  with $u|_{t=0}\equiv0$. 

The main properties we need for $\mathcal{L}_U$ and $e^{t\mathcal{L}_U}$ are summarized in the following proposition.

\begin{proposition}\label{pot0.3}
    Let $U\in Y$, $\mathcal{L}_U$, $K_U$ and $\mathcal{L}_0$ be defined as above. 
    Then we have the following conclusions:

  (i) The spectrum of $\mathcal{L}_{0}$ is contained in $\Sigma:=\{\lambda\in\C:\,\Re \lambda\leq -1/4\}$ and we have the bounds for $\lambda=\alpha+i\beta\not\in\Sigma$ and all $\phi=(\lambda-\mathcal{L}_0)^{-1}g$,
  \begin{equation}\label{port1.11}
      \|\phi\|_{X}\lesssim \frac{1}{\alpha + 1/4}\|g\|_X, \qquad\|\nabla^2\phi\|_X+\|(-\frac{1}{2}x\cdot\nabla+\alpha+i\beta)\phi\|_X\lesssim \frac{\alpha+1}{\alpha + 1/4}\|g\|_X. 
  \end{equation}
Moreover, the associated semigroup generated by $\mathcal{L}_0$ satisfies the bound for $t\ge0$ and $g\in X$,
\begin{equation}\label{port1.2}
    \|e^{\mathcal{L}_0t}g\|_{X}+\min\{t^{\frac{1}{2}},1\} \|\nabla e^{\mathcal{L}_0t}g\|_{X}+\min\{t,1\}\|\nabla^2e^{\mathcal{L}_0t}g\|_{X}\lesssim e^{-t/4}\|g\|_X. 
\end{equation}

(ii) The operator $K_U$ is relatively compact with respect to $\mathcal{L}_0$, in the sense that $K_U(\lambda-\mathcal{L}_0)^{-1}: X\to X$ is compact for any $\lambda\in\C\setminus\Sigma$. The spectrum $\sigma(\mathcal{L}_U)=\Sigma\cup\{\lambda_j\}$, where the discrete sequence $\lambda_j$ may only accumulate in $\Sigma$. Moreover, for any $\alpha_0>-1/4$, there are only finitely many $\lambda_j$ with $\Re\lambda_j\ge\alpha_0$. 

(iii) Take $m>\max\{-1/4, \Re\lambda_j\}$. The semigroup $e^{t\mathcal{L}_U}$ is a relatively compact perturbation of $e^{t\mathcal{L}_0}$ for any $t\ge0$, and we have the bounds 
\begin{equation}\label{port1.20}
    \|e^{t\mathcal{L}_U}g\|_{X}+\min\{t^{\frac{1}{2}},1\} \|\nabla e^{t\mathcal{L}_U}g\|_{X}+\min\{t,1\}\|\nabla^2e^{t\mathcal{L}_U}g\|_{X}\lesssim_{\|U\|_Y, m} e^{mt}\|g\|_X. 
\end{equation}
\end{proposition}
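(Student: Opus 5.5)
For part (i) I will work on the Fourier side, or more concretely through the self-similar change of variables. Write $\phi(s)=e^{s\mathcal{L}_0}g$. By the correspondence \eqref{pot1.01}--\eqref{pot1.001} with $U=0$, the function $h(x,t)=(t+1)^{-1/2}\phi(x/\sqrt{t+1},\log(t+1))$ solves the heat equation. Hence
\[
e^{s\mathcal{L}_0}g(y)=e^{s/2}\,(e^{(e^s-1)\Delta}g)(e^{s/2}y),
\]
which is explicit. Changing variables, $\|e^{s\mathcal{L}_0}g\|_{L^p}=e^{s/2-3s/(2p)}\|e^{(e^s-1)\Delta}g\|_{L^p}$.

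For $s\le1$ this gives the smoothing bounds in \eqref{port1.2} directly from heat kernel estimates: each derivative costs $(e^s-1)^{-1/2}\approx s^{-1/2}$, and there is an extra factor $e^{s/2}$ per derivative. For $s\ge1$ I use $\|e^{\tau\Delta}g\|_{L^p}\lesssim \tau^{-\frac32(\frac1q-\frac1p)}\|g\|_{L^q}$ with $q=2$. For $p=2$ this only gives the growth $e^{s/4}$, which is not decay. So I will instead get the decay $e^{-s/4}$ from the $L^2$ energy identity. Indeed, $\mathrm{Re}\langle\mathcal{L}_0\phi,\phi\rangle=-\|\nabla\phi\|_2^2+(\tfrac12-\tfrac34)\|\phi\|_2^2$, because $\tfrac12\int x\cdot\nabla\phi\,\bar\phi=-\tfrac34\|\phi\|^2$. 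This gives $\|e^{s\mathcal{L}_0}\|_{L^2\to L^2}\le e^{-s/4}$. For $L^4$ the same computation gives the dissipative constant $\tfrac12-\tfrac38=\tfrac18$, which is the wrong sign. So the $L^4$ bound for large $s$ must be obtained by interpolation: Gagliardo--Nirenberg, $\|\phi\|_4\lesssim\|\phi\|_2^{1/4}\|\nabla\phi\|_2^{3/4}$, combined with $L^2$ decay of $\nabla\phi$. Since $\nabla$ commutes with $\mathcal{L}_0$ up to a shift by $+\tfrac12$, we get $\|\nabla e^{s\mathcal{L}_0}g\|_2\lesssim e^{-3s/4}\|\nabla g\|_2$. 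I then combine this with the short-time smoothing by a semigroup splitting at $s=1$.

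The resolvent bounds \eqref{port1.11} follow by Laplace transform, $(\lambda-\mathcal{L}_0)^{-1}=\int_0^\infty e^{-\lambda s}e^{s\mathcal{L}_0}\,ds$, which converges for $\alpha>-1/4$ and gives $\|\phi\|_X\lesssim(\alpha+1/4)^{-1}\|g\|_X$. The bound for $\nabla^2\phi$ comes from the integrable singularity $s^{-1}$ near $0$. That singularity is logarithmically divergent, so I will instead use maximal $L^p$ regularity for the heat operator in the self-similar frame, or equivalently elliptic estimates for $\Delta\phi=\lambda\phi-\tfrac12 x\cdot\nabla\phi-\tfrac12\phi+g$ combined with a commutator estimate. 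The bound for $(-\tfrac12x\cdot\nabla+\lambda)\phi$ is then read off from the equation.

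For (ii) I will show that $K_U(\lambda-\mathcal{L}_0)^{-1}$ is compact. For $\phi$ in a bounded set of $\mathcal{D}$, we control $\nabla^2\phi$ and $x\cdot\nabla\phi$ in $X$. The terms $U\cdot\nabla\phi$ and $\phi\cdot\nabla U$ carry the weights $|U|\lesssim\langle x\rangle^{-1}$ and $|\nabla U|\lesssim\langle x\rangle^{-2}$, so they are small outside large balls. Inside balls, Rellich gives compactness of $\nabla\phi$ and $\phi$ in $L^2\cap L^4$, and boundedness of $\Pi$ on $L^p$ finishes the argument. The one subtle point is uniform tail smallness of $U\cdot\nabla\phi$ in $L^2$. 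I will get it from $\|\langle x\rangle^{-1}\nabla\phi\|_{L^2(|x|>R)}\lesssim R^{-1}\|\nabla\phi\|_2$, together with a bound of $\nabla\phi$ in terms of $\phi$ and $\nabla^2\phi$. Analytic Fredholm theory then gives $\sigma(\mathcal{L}_U)\setminus\Sigma$ discrete, with eigenvalues accumulating only in $\Sigma$. Finiteness in $\{\Re\lambda\ge\alpha_0\}$ needs $\|K_U(\lambda-\mathcal{L}_0)^{-1}\|<1$ for $|\Im\lambda|$ large. This holds because the $\nabla\phi$ part of the resolvent decays like $|\lambda|^{-1/2}$, by interpolating the bounds in \eqref{port1.11}.

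For (iii) I will use Duhamel: $e^{t\mathcal{L}_U}=e^{t\mathcal{L}_0}+\int_0^te^{(t-s)\mathcal{L}_0}K_Ue^{s\mathcal{L}_U}\,ds$. The local-in-time bounds with smoothing come from a fixed point on $[0,1]$, using that $K_U$ loses one derivative, which is integrable against $(t-s)^{-1/2}$. Global growth $e^{mt}$ follows from the resolvent bound on the line $\Re\lambda=m$, which is uniform there by (ii), via a Gearhart--Prüss-type argument. Because $X$ is not Hilbert, I plan instead to use the compact-perturbation structure: the essential growth bound is at most $-1/4$, and the spectral bound is less than $m$.

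I expect the main obstacle to be the $L^4$ part of \eqref{port1.2}, where the self-similar scaling is only marginally favorable, and the endpoint $\nabla^2$ resolvent estimate. These are the places where the interplay of the $L^2$ and $L^4$ norms has to be handled carefully.
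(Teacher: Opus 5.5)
\textbf{Gap: finiteness of the eigenvalues in (ii).} Your compactness argument and the analytic Fredholm step in (ii) are fine. They give discreteness of the $\lambda_j$, since smallness of $K_U(\lambda-\mathcal{L}_0)^{-1}$ for large real $\lambda$ does follow from \eqref{port1.11}. The gap is the finiteness of the eigenvalues in $\{\Re\lambda\ge\alpha_0\}$. You assert that the $\nabla\phi$ part of $(\lambda-\mathcal{L}_0)^{-1}$ decays like $|\lambda|^{-1/2}$ for large $|\Im\lambda|$, ``by interpolating \eqref{port1.11}''. But \eqref{port1.11} is uniform in $\beta$ and contains no decay in $\beta$. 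Moreover, the claimed decay is false, because $\mathcal{L}_0$ is not sectorial:
\begin{itemize}
\item Let $M_\beta$ be the Fourier multiplier $|\xi|^{2i\beta}$. Since $\widehat{\mathcal{L}_0\phi}=-(|\xi|^2+1)\hat\phi-\tfrac12\xi\cdot\nabla_\xi\hat\phi$ and $\xi\cdot\nabla_\xi|\xi|^{2i\beta}=2i\beta|\xi|^{2i\beta}$, one gets $\mathcal{L}_0M_\beta=M_\beta(\mathcal{L}_0-i\beta)$. Hence $(\alpha+i\beta-\mathcal{L}_0)^{-1}M_{-\beta}=M_{-\beta}(\alpha-\mathcal{L}_0)^{-1}$.
\item Take $g_\beta=M_{-\beta}g_0$, with $g_0$ divergence-free and $\hat g_0$ supported in an annulus. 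Then $M_{-\beta}$ is unitary on $L^2$ and commutes with $\nabla$, while $\|g_\beta\|_{L^4}\to0$ by stationary phase, since $g_\beta$ spreads over $|x|\sim|\beta|$.
\item Therefore $\|\nabla(\alpha+i\beta-\mathcal{L}_0)^{-1}g_\beta\|_X/\|g_\beta\|_X$ stays bounded below as $|\beta|\to\infty$.
\end{itemize}
So no argument that controls $K_U\phi$ only through $\|\phi\|_{X^1}$ and free-resolvent bounds can make $K_U(\lambda-\mathcal{L}_0)^{-1}$ small along vertical lines. The ``uniform resolvent bound on $\Re\lambda=m$'' you invoke in (iii) rests on the same false claim.

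\textbf{What is missing.} You need to use the spatial decay of $U$, which breaks this dilation invariance. The paper does it as follows:
\begin{itemize}
\item It writes $K_Ug=-\Pi\,\mathrm{div}\,h$ with $h=U\otimes g+g\otimes U$, so that $\|\langle x\rangle h\|_X\lesssim\|U\|_Y\|g\|_X$.
\item From the heat-kernel representation it proves the weighted bound $\|\langle x\rangle(\lambda-\mathcal{L}_0)^{-1}K_Ug\|_X\lesssim\langle\alpha\rangle^{-1/3}\|g\|_X$, uniformly in $\beta$.
\item It rewrites the resolvent equation as $\phi=(\Delta-1-\lambda)^{-1}\mathrm{div}[\Pi h-\tfrac12\phi\otimes x]$. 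The dilation term is then controlled by the weighted bound, and $(\Delta-1-\lambda)^{-1}\mathrm{div}$ supplies the factor $(1+|\alpha|+|\beta|)^{-1/2}$.
\end{itemize}

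\textbf{Alternative repair.} You could get finiteness from the structure you invoke in (iii), provided you actually prove that $e^{t\mathcal{L}_U}-e^{t\mathcal{L}_0}$ is compact; your proposal only asserts this. Then $e^{t\mathcal{L}_U}$ has only finitely many eigenvalues in $\{|z|\ge e^{t\alpha_0}\}$, each of finite multiplicity. Infinitely many $\lambda_j$ with $\Re\lambda_j\ge\alpha_0$ would produce one eigenvalue $e^{t\lambda_j}$ of $e^{t\mathcal{L}_U}$ with infinitely many linearly independent eigenvectors, a contradiction.

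\textbf{A smaller point in (i).} Your explicit formula already yields \eqref{port1.2} directly. For $s\ge1$ take $\tau=e^s-1$ and use $\|\nabla^ke^{\tau\Delta}g\|_{L^p}\lesssim\tau^{-\frac k2-\frac32(\frac12-\frac1p)}\|g\|_{L^2}$. The total exponent is then $\tfrac12+\tfrac k2-\tfrac{3}{2p}-\tfrac k2-\tfrac34+\tfrac{3}{2p}=-\tfrac14$ for $p\in\{2,4\}$ and $k\le2$. Your statement that $p=q=2$ gives growth is a miscalculation. The Gagliardo--Nirenberg detour also has a sign error: $\nabla\mathcal{L}_0=(\mathcal{L}_0+\tfrac12)\nabla$ gives $\|\nabla e^{s\mathcal{L}_0}g\|_{L^2}\le e^{s/4}\|\nabla g\|_{L^2}$, not $e^{-3s/4}$.
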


\begin{proof} Most of the claims were proved in \cite{JiSv2} using  \eqref{pot1.01} and \eqref{pot3} which transform results in the proposition to corresponding results for the linear Stokes equations (with lower order terms). 

(i) The claims follow from Lemma 2.1 and Lemma 2.3 in \cite{JiSv2}. The bounds \eqref{port1.11} are somewhat more quantitative in $\alpha$ but follow from the same arguments. 

(ii) The relative compactness of $K_U$ with respect to $\mathcal{L}_0$ follows from Lemma 2.2 in \cite{JiSv2}. The claim concerning the spectrum structure of $\sigma(\mathcal{L}_U)$ then follows from standard analytic Fredholm theory (or Weyl's theorem for essential spectra).

The analysis in \cite{JiSv2}, while robust and sufficient for our applications, does not rule out the possibility that $\Im\lambda_j$ may be arbitrarily large. We can eliminate this possibility using the following bounds. 
  For any $\lambda=\alpha+i\beta$ with $\alpha>-1/4$, $U\in Y$, $g\in \mathcal{D}\subseteq X$, we show that
  \begin{equation}\label{pot10}
  \begin{split}
   &   \|(\lambda-\mathcal{L}_0)^{-1}K_Ug\|_X \lesssim\frac{1}{(1+|\alpha|+|\beta|)^{1/2}}\|U\|_Y\|g\|_{X}. 
   \end{split}   
  \end{equation}
 This bound implies that $\mathcal{L}_U$ has no eigenvalues for sufficiently large $\Im\lambda$, using the well known resolvent identity
  \begin{equation*}
 (\lambda-\mathcal{L}_0-K_U)^{-1}=(I-(\lambda-\mathcal{L}_0)^{-1}K_U)^{-1}(\lambda-\mathcal{L}_0)^{-1}.
 \end{equation*}

To prove \eqref{pot10} let $g\in X$ and $U\in Y$, and denote 
 the matrix
\begin{equation}\label{decay2.1}
    h:=U\otimes g+g\otimes U, \qquad h_{ij}=U_ig_j+g_iU_j.
\end{equation}
Notice that 
\begin{equation}\label{decay1}
    K_Ug=-\Pi\,{\rm div}(U\otimes g+g\otimes U). 
\end{equation}
We have the bound 
\begin{equation}\label{decay2}
    \|\langle x\rangle U\otimes g\|_{X}+\|\langle x\rangle g\otimes U\|_X\lesssim \|U\|_Y\|g\|_X. 
\end{equation}

We show now that
\begin{equation}\label{decay3}
   \| \langle x\rangle(\lambda-\mathcal{L}_0)^{-1}K_Ug\|_X\lesssim \langle\alpha\rangle^{-1/3}\|g\|_X. 
\end{equation}
In view of the formulae \eqref{pot2}-\eqref{pot4} (with the lower terms being identically zero for the resolvent $(\lambda-\mathcal{L}_0)^{-1}$), it suffices to prove that
\begin{equation}\label{decay4}
\begin{split}
   & \Big\|\langle x\rangle\int_0^1e^{\Delta(1-s)}s^{-\frac{3}{2}+\alpha+i\beta}\Pi\big({\rm div}\,h\big)(\frac{\cdot}{\sqrt{s}})\,ds\Big\|_X\\
   &= \Big\|\langle x\rangle\int_0^1 s^{-1+\alpha+i\beta}e^{\Delta(1-s)}{\rm div}\,\Pi \,h(\frac{\cdot}{\sqrt{s}})\,ds\Big\|_X\lesssim \langle\alpha\rangle^{-1/3}\|\langle x\rangle h\|_X,
\end{split}    
\end{equation}
using standard  estimates on the heat kernel. The desired bounds \eqref{decay3} follow.

To proceed, we rewrite $\phi:=(\lambda-\mathcal{L}_0)^{-1}K_Ug$ as 
\begin{equation*}
      \Delta \phi-\phi-\alpha\phi-i\beta\phi =\Pi \,{\rm div}\,h - \frac{1}{2}{\rm div}\,(\phi\otimes x),
\end{equation*}
thus
\begin{equation*}
    \phi=(\Delta -1-\alpha-i\beta)^{-1}\,{\rm div}\big[\Pi h - \frac{1}{2}(\phi\otimes x)\big]. 
\end{equation*}
The desired bounds \eqref{pot10} then follow from \eqref{decay2}-\eqref{decay3}, since the operator $(\Delta-1-\alpha-i\beta)^{-1}\,{\rm div}$ has an operator norm bounded by $C(1+|\alpha|+|\beta|)^{-1/2}$.

(iii)  The difference $e^{t\mathcal{L}_U} - e^{t\mathcal{L}_0}$ is a compact operator on $X$ for any $t \ge 0$, due to Lemma 2.7 in \cite{JiSv2}. Consequently, by Weyl's theorem, the essential spectrum of the full semigroup is identical to that of the base semigroup, meaning $\sigma_{ess}(e^{t\mathcal{L}_U}) \subseteq \{\lambda \in \mathbb{C} : |\lambda| \le e^{-t/4}\}$. Since there are strictly finitely many discrete eigenvalues outside this stable region (due to (ii)), the spectral radius of $e^{t\mathcal{L}_U}$ is bounded by $e^{mt}$ for any $m>\max\{-1/4,\Re\lambda_j\}$. The desired bounds \eqref{port1.20} follow from Lemma 2.9 and equation (2.21) in \cite{JiSv2}.
\end{proof}

We turn now to the proof of Theorem \ref{MainThm}, which we divide into several steps:

{\bf{Step 1.}} Under the assumptions of Theorem \ref{MainThm} and standard elliptic bootstrapping arguments (similar to the arguments in \cite[Section 4]{JiSv1}), the vector field $U\in C^2(\R^3)$ satisfies
\begin{equation}\label{pot15}
    \Delta U+\frac{1}{2}\mathbf{x}\cdot\nabla U+\frac{1}{2}U-\Pi(U\cdot\nabla U)=0, \quad {\rm div}\,U=0,
\end{equation}
in $\R^3$, with 
\begin{equation}\label{pot15.001}
|\nabla^\alpha(U-e^{\Delta}U_0)(x)|\lesssim\langle x\rangle^{-3-|\alpha|}\qquad\text{ for any }x\in\R^3,\,|\alpha|\leq 2.
\end{equation}
Moreover, the linearized operator $\mathcal{L}_U:\mathcal{D}\subseteq X\to X$ has at least one unstable eigenvalue $\lambda$ with $\Re\lambda>0$ and the corresponding eigenfunction $\Phi\in \mathcal{D}$ satisfying
\begin{equation}\label{pot14}
    |\nabla^{\alpha}\Phi(x)|\lesssim \langle x\rangle ^{-3-|\alpha|}\qquad\text{ for any }x\in\R^3,\,|\alpha|\leq 2.
\end{equation}

Consider the time-dependent version of the equation \eqref{pot15} given by
\begin{equation}\label{pot15.1}
   \partial_sV= \Delta V+\frac{1}{2}\mathbf{x}\cdot\nabla V+\frac{1}{2}V-\Pi\big(V\cdot\nabla V\big), \quad {\rm div}\,V=0,
\end{equation}
with the boundary condition \eqref{pot15.001}. The solution $U$ to \eqref{pot15} can be viewed as a steady state of \eqref{pot15.1}. Since the linearized operator $\mathcal{L}_U$ has unstable eigenvalues, we can construct an unstable manifold near $U$. Some care is needed, however, since $U$ decays like $1/|x|$, so $U\not\in L^2(\R^3)$. The localization argument we provide here is a modification of the argument in \cite{JiSv2}, which does not need the assumption that the maximum real part of all the unstable eigenvalues of $\mathcal{L}_U$ is less than $1/8$. A similar argument was used recently in \cite{HoWaYa}. 

We decompose first
\begin{equation}\label{pot16}
    U_0=U_{0l}+ U_{ 0f},
\end{equation}
where the divergence free field $U_{0l}\equiv U_0$ for $|x|\leq 2R$, ${\rm supp\,} U_{0l}\subseteq B_{4R}$, $U_{0l}\in C^\infty(B_{4R}\backslash \{0\})$, for some $R\gg1$ that we can choose freely to make $U_{0f}$ small. In addition, $U_{0f}$ satisfies the property that for all $x\in\R^3$ and multi-index $\alpha$,
\begin{equation}\label{pot17}
    U_{0f}\in C^\infty(\R^3\backslash B_{2R}), \quad |\nabla^{\alpha}U_{0f}(x)|\lesssim |x|^{-1-|\alpha|}.
\end{equation}
To achieve the decomposition \eqref{pot16} in the divergence-free class, we define the vector potential $A:=U_0\times x$, which is homogeneous of degree $0$, and notice that $U_0=\nabla\times A$ (using the assumptions that $U_0$ is divergence-free and homogeneous of degree $-1$). Let $\chi \in C^\infty_0(B_{4R})$ be a smooth radial cutoff function such that $\chi \equiv 1$ on $B_{2R}$. We define the localized velocity fields by
\begin{equation}\label{pot_potential}
U_{0l} := \nabla \times (\chi A),\qquad U_{0f} := \nabla \times ((1-\chi) A),
\end{equation}
and the desired decomposition follows.

We are now looking for solutions of the Navier--Stokes equations \eqref{Nav1} of the form
\begin{equation}\label{pot20}
    \widetilde{u}(x,t)=\frac{1}{\sqrt{t}}U(\frac{x}{\sqrt{t}})-e^{\Delta t}U_{0f}(x)+h(x,t):=u_1(x,t)-U_f(x,t)+h(x,t),
\end{equation}
where $h$ satisfies on $\R^3\times(0,1]$ the perturbed Navier--Stokes equation
\begin{equation}\label{pot21}
\begin{split}
 &\partial_th-\Delta h +\Pi\big(u_1\cdot\nabla h+h\cdot\nabla u_1-U_f\cdot\nabla h-h\cdot\nabla U_f+h\cdot\nabla h\big) =\,g(x,t), \\
 &{\rm div}\, h=\,0,
\end{split}    
\end{equation}
with zero initial data and source term given by
\begin{equation}\label{pot22}
    g:=\Pi\big(u_1\cdot\nabla U_f+U_f\cdot\nabla u_1-U_f\cdot\nabla U_f\big).
\end{equation}

To solve \eqref{pot21}, we make the transformation for $x\in\R^3, t>0$,
\begin{equation}\label{pot23}
    U_f(x,t):=\frac{1}{\sqrt{t}}b\big(\frac{x}{\sqrt{t}},\log t\big),\quad h(x,t):=\frac{1}{\sqrt{t}}H\big(\frac{x}{\sqrt{t}},\log t\big), \quad g(x,t):= \frac{1}{t^{3/2}}G\big(\frac{x}{\sqrt{t}},\log t\big).
\end{equation}
It follows from \eqref{pot20} and \eqref{pot22} that for $s\leq0$,
\begin{equation}\label{pot23.1}
    G=\Pi\big[U\cdot\nabla b +b \cdot\nabla U- b\cdot\nabla b\big]
\end{equation}
and that $H$ satisfies the equation for $s\leq0$, 
 \begin{equation}\label{pot24}
 \begin{split}
    & \partial_s H=\mathcal{L}_UH+\Pi\big[b\cdot\nabla H+H\cdot\nabla b+H\cdot\nabla H\big] +G, \\
    &{\rm div\,}H=0. 
 \end{split}    
 \end{equation}

{\bf{Step 2.}} Thanks to the spectral properties established in Proposition \ref{pot0.3}, we can directly construct a family of solutions bifurcating from $U$. By Proposition \ref{pot0.3}, the spectrum of $\mathcal{L}_U$ in the right half-plane consists of strictly finitely many discrete eigenvalues. We define the finite-rank unstable projection $P_u$ via the Riesz integral 
\begin{equation}\label{def_Pu}
P_u = \frac{1}{2\pi i} \oint_{\Gamma_u} (z I - \mathcal{L}_U)^{-1} \, dz,
\end{equation}
where $\Gamma_u$ is a smooth closed contour in the right half-plane strictly enclosing $\sigma_u$, and let $P_{cs} = I - P_u$ denote the complementary center-stable projection.

By the spectral mapping theorem and the bounds established in \eqref{port1.20}, the restricted semigroups satisfy the bounds
\begin{equation}\label{semigroup_bounds}
\begin{split}
&\|e^{s\mathcal{L}_U} P_u g\|_X + \|\nabla e^{s\mathcal{L}_U} P_u g\|_X \lesssim e^{\gamma_u s/2} \|g\|_X \quad \text{for } s \le 0, \\
&\|e^{s\mathcal{L}_U} P_{cs} g\|_X + \min(1, s^{1/2}) \|\nabla e^{s\mathcal{L}_U} P_{cs} g\|_X \lesssim_\epsilon e^{\epsilon s} \|g\|_X \quad \text{for } s > 0,
\end{split}
\end{equation}
where $\gamma_u > 0$ is the minimum real part of the unstable eigenvalues, and $\epsilon > 0$ can be chosen arbitrarily small.

We rewrite equation \eqref{pot24} using the variation of parameters formula. For a given $\phi_{u0} \in \text{Ran}(P_u)$, we seek a trajectory $H(s)$ satisfying the boundary condition $P_u(H)(0)=\phi_{u0}$ and
\begin{equation}\label{unstable_integral}
\begin{split}
&H(s) = e^{s\mathcal{L}_U}\phi_{u0} + \int_{-\infty}^s e^{(s-\tau)\mathcal{L}_U} P_{cs} \mathcal{N}(H)(\tau) \, d\tau - \int_s^0 e^{(s-\tau)\mathcal{L}_U} P_u \mathcal{N}(H)(\tau) \, d\tau,\\
&\mathcal{N}(H):=\Pi\big[b\cdot\nabla H+H\cdot\nabla b+H\cdot\nabla H\big] +G.
\end{split}
\end{equation}
We solve this equation by a fixed point argument in the Banach space $E_\eta$ defined by
\begin{equation}\label{Enorm}
\begin{split}
&E_\eta:=\big\{h\in C((-\infty,0]:X^1):\,\|h\|_{E_\eta}:= \sup_{s \le 0} e^{-\eta s} \|h(s)\|_{X^1}<\infty\big\},\\
&X^1:=\big\{f\in X:\,\|f\|_{X^1}:=\|f\|_X+\|\nabla f\|_{X}<\infty\big\},
\end{split}
\end{equation}
with a decay rate $\eta$ sufficiently small, $\eta:=\min\{1/100,\gamma_u/4\}$. 

The function $b$ defined in \eqref{pot23} satisfies the bounds
\begin{equation}\label{pot25}
    \begin{split}
        &\|b(s)\|_{L^\infty(\R^3)}\lesssim e^{s/2}R^{-1}, \quad \|b(s)\|_{L^4(\R^3)}\lesssim e^{s/8}R^{-1/4},\\
        &\|\nabla b(s)\|_{L^\infty(\R^3)}\lesssim e^sR^{-2},\quad \|\nabla b(s)\|_{L^4(\R^3)}\lesssim e^{5s/8}R^{-5/4}. 
    \end{split}
\end{equation}
It follows from \eqref{pot15.001} and \eqref{pot25} that for some $\epsilon_R\to0+$ as $R\to\infty$,
\begin{equation*}
    \begin{split}
        &\|(b\cdot\nabla b)(s)\|_{L^2}\lesssim e^{3s/4}\epsilon_R,\qquad \|(b\cdot\nabla b)(s)\|_{L^4}\lesssim e^s\epsilon_R,\\
        &\|(b\cdot\nabla U)(s)\|_{L^2}\lesssim e^{s/2}\epsilon_R,\qquad \|(U\cdot\nabla b)(s)\|_{L^2}\lesssim e^{5s/8}\epsilon_R,\\
        &\|(b\cdot\nabla U)(s)\|_{L^4}\lesssim e^{s/2}\epsilon_R,\qquad \|(U\cdot\nabla b)(s)\|_{L^4}\lesssim e^{s}\epsilon_R.
    \end{split}
\end{equation*}
Therefore, using also \eqref{pot23.1},
\begin{equation}\label{pot26.1}
    \|G(s)\|_X\lesssim e^{s/2}\epsilon_R.
\end{equation}
Moreover, the definition \eqref{unstable_integral} and the bounds \eqref{pot25} show that
\begin{equation}\label{pot26.2}
\|\mathcal{N}(H_1)(s)-\mathcal{N}(H_2)(s)\|_X\lesssim \|(H_1-H_2)(s)\|_{X^1}(e^{s/2}\epsilon_R+\|H_1(s)\|_{X^1}+\|H_2(s)\|_{X^1})
\end{equation}
for any $H_1,H_2\in C((-\infty,0]:X^1)$. A standard fixed-point argument using \eqref{semigroup_bounds}, \eqref{pot26.1}, and \eqref{pot26.2} shows that the equation \eqref{unstable_integral} admits a unique solution 
\begin{equation}\label{pot26.3}
H\in C((-\infty,0]:X^1),\qquad \sup_{s\in (-\infty,0]}e^{-\eta s}\|H(s)\|_{X^1}\lesssim \epsilon_R+\|\phi_{u0}\|_{X^1},
\end{equation}
provided that $\|\phi_{u0}\|_{X^1}\ll 1$ and $R$ is chosen sufficiently large.
\medskip

{\bf{Step 3.}} The solution $\widetilde{u}\in C([0,1]:L^2)$ given by
\begin{equation}\label{pot28}
   \widetilde{u}(x,t):= \frac{1}{\sqrt{t}}U(\frac{x}{\sqrt{t}})-e^{t\Delta }U_{0f}(x)+\frac{1}{\sqrt{t}}H(\frac{x}{\sqrt{t}},\log t)
\end{equation}
then solves the Navier--Stokes equation with initial data $U_{0l}$. Comparing the values at $t=1$, we conclude that different choices of $\phi_{u0}\in \text{Ran}(P_u)$ lead to different solutions $\widetilde{u}$ with the same initial data $U_{0l}\in C^\infty(\R^3\backslash\{0\})$, which is compactly supported and has a singularity of the order $O(1/|x|)$ as $x\to0$. 

Clearly, we have $\sqrt{t}|\widetilde{u}(x,t)|\lesssim 1$ and $|x||\widetilde{u}(x,t)|\lesssim 1$ for any $x,t\in\R^3\times (0,1]$, thus $\widetilde{u}\in L^\infty_tL^{3,\infty}_x$ and $\sqrt{t}\|\widetilde{u}(t)\|_{L^\infty}\lesssim 1$. Moreover, we can rewrite 
\begin{equation}\label{pot28.1}
\widetilde{u}(x,t)-e^{t\Delta}U_{0l}(x)= \frac{1}{\sqrt{t}}(U-e^{\Delta}U_0)(\frac{x}{\sqrt{t}})+\frac{1}{\sqrt{t}}H(\frac{x}{\sqrt{t}},\log t),
\end{equation}
so $\widetilde{u}\in C([0,1]:H^\alpha_x)$, $\alpha<1/2$, as a consequence of \eqref{pot15.001} and \eqref{pot26.3}. 

Finally, by parabolic regularization using the identity \eqref{pot24} and the bounds \eqref{port1.20} and \eqref{pot26.1}--\eqref{pot26.2}, we have $\|H(s)\|_{H^{\alpha}}\lesssim_\alpha \epsilon_R+\|\phi_{u0}\|_{X^1}$ for any $s\in(-\infty,0]$ and $\alpha\in[0,2)$. The identity \eqref{pot28.1} then shows that $\widetilde{u}\in L^2_tH^{\alpha+1}_x$ for any $\alpha\in[0,1/2)$, as desired.

The strong energy identity \eqref{pr10} follows for all $t_0\leq t_1\in(0,1]$, since the solutions $\widetilde{u}$ are regular in this interval. It then follows for all $t_0\leq t_1\in[0,1]$, by the continuity of $\widetilde{u}(t)$ in $L^2$.
\medskip

{\bf{Step 4.}} Assume now that $U,\widetilde{U}$ are axi-symmetric swirl-free vector fields, and we would like to prove the same property for the resulting solutions of the Navier--Stokes equation. Notice that a vector field $V=(V_1,V_2,V_3)$ in $\R^3$ is ASSF if and only if 
\begin{equation}\label{pot29.1}
\begin{split}
&O(V_1)=-V_2, \qquad O(V_2)=V_1,\qquad O(V_3)=0,\\
&-x_2V_1(x)+x_1V_2(x)=0\qquad \text{ for all }x\in\R^3,
\end{split}
\end{equation}
where $O:=x_1\partial_2-x_2\partial_1$. 

We define the spaces
\begin{equation}\label{pot29.2}
\begin{split}
 &X_{sym}:=\big\{\phi\in X:\, \phi\text{ is axially symmetric and swirl-free}\big\},\qquad\mathcal{D}_{sym}:=\mathcal{D}\cap X_{sym},\\
 &Y_{sym}:=\big\{U\in Y:\, U\text{ is axially symmetric and swirl-free}\big\}.
 \end{split}
\end{equation}
The characterization \eqref{pot29.1} can be used to show that if $U\in Y_{sym}$ then the operator $\mathcal{L}_U$ preserves the symmetries, i.e. $\mathcal{L}_U:\mathcal{D}_{sym}\to X_{sym}$ and $(\lambda-\mathcal{L}_U)^{-1}:X_{sym}\to X_{sym}$. Moreover, the vector fields $U_{0l}$ and $U_{0f}$ defined in \eqref{pot_potential} are also ASSF. In particular, the vector fields $g$ and $G$ defined in \eqref{pot22} and \eqref{pot23.1} are ASSF. Therefore the vector field $H$ defined in \eqref{unstable_integral} is ASSF, provided that $\phi_{u0}\in X_{sym}$. This completes the proof of Theorem \ref{MainThm}.

\section{Remarks}\label{sec:4} We conclude this article with several remarks.

\subsection{Numerical stability} Similar analysis can be carried out for other values of the parameters $(A,p)$ that describe the computational domain. We have run the continuation procedure for the pairs $(A,p)=(40,4)$, $(A,p)=(50,4)$, $(A,p)=(25,5)$, and $(A,p)=(15,6)$, corresponding to physical domains of sizes $L\approx 485$, $L\approx 751$, $L\approx 995$, and $L\approx 1182$ respectively. The leading eigenvalue exhibits remarkable stability over all these domains. Similar stability is observed by running the $\sigma$-continuation directly in the physical space, using equations \eqref{sf1}--\eqref{Blt2}, which is possible for smaller values of $L$ such as $L=100$, $L=150$, $L=200$, using either B-splines or finite elements with FEniCS.

\begin{figure}[ht]
\centering
\includegraphics[width=0.8\textwidth]{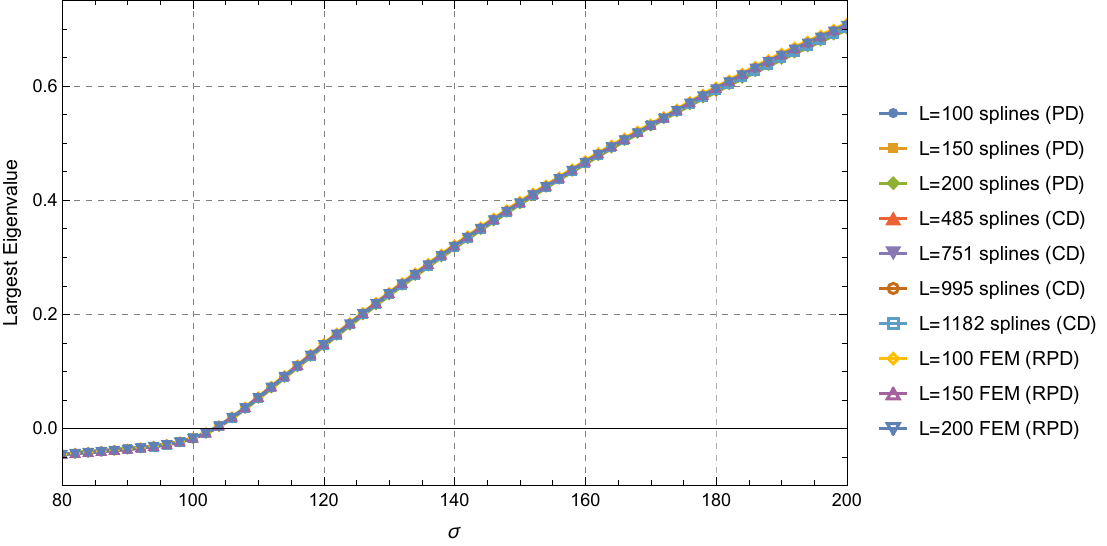}
\caption{The leading eigenvalue during 10 different simulations, using several methods, computed in either the physical domain (PD), the computational domain (CD), or the rescaled physical domain (RPD). The plotted curve actually consists of 10 nearly indistinguishable overlapping curves. The maximum deviation of the leading eigenvalue we have observed among all of these simulations is less than $5\times10^{-3}$, for all sampled values of $\sigma\in[80,200]$.}
\label{fig:eigenvalues}
\end{figure}

More importantly, instability appears to be a broader phenomenon, even in the class of swirl-free vector fields. We have chosen the specific asymptotic data \eqref{sf2} mainly because of its algebraic simplicity, but other homogeneous asymptotic data with suitable parities, such as 
\begin{equation}\label{data3}
\Psi(r,z)=\frac{\sigma z^3}{(r^2+z^2)^2}, \qquad \Omega(r,z)=\frac{\sigma z(6r^2-14 z^2)}{(r^2+z^2)^3},
\end{equation}
also lead to unstable eigenvalues, at different transition thresholds (such as $\sigma\approx 145$ for the asymptotic data \eqref{data3}).

\subsection{Validation} 
Our primary form of validation is demonstrating that all our solutions satisfy the continuous equations to an extremely high degree of precision, as measured on dense uniform grids rather than merely at the collocation points. For comparison, spurious eigenmodes found by the solver exhibit much worse dense residuals, of $\approx 10^{-2}$.

Figure \ref{fig:validation} illustrates the convergence properties of both the nonlinear base solution $(\Psi, \Omega)$ and the principal unstable eigenmode $(\widetilde{\Psi}, \widetilde{\Omega})$, as we track the maximum absolute residual and the eigenvalue deviation $|\lambda - \lambda_{\mathrm{ref}}|$ as a function of the total degrees of freedom (DOF), using B-splines of degrees 6, 8, and 10. The results exhibit clear spectral-like convergence. For instance, transitioning from degree-6 to degree-10 splines at $10^5$ degrees of freedom drops the eigenvalue error by nearly three orders of magnitude. At the highest resolution (approximately $3 \times 10^5$ DOF with degree-10 splines), the maximum pointwise residual of the nonlinear solution drops below $10^{-9}$, and the computed eigenvalue converges to within $10^{-12}$ of our high-precision reference value $\lambda_{\mathrm{ref}}$. This steep, monotonic convergence confirms that our discrete approximations are resolving a true, exact mathematical solution rather than a numerical artifact.

\begin{figure}[ht]
\centering
\includegraphics[width=\textwidth]{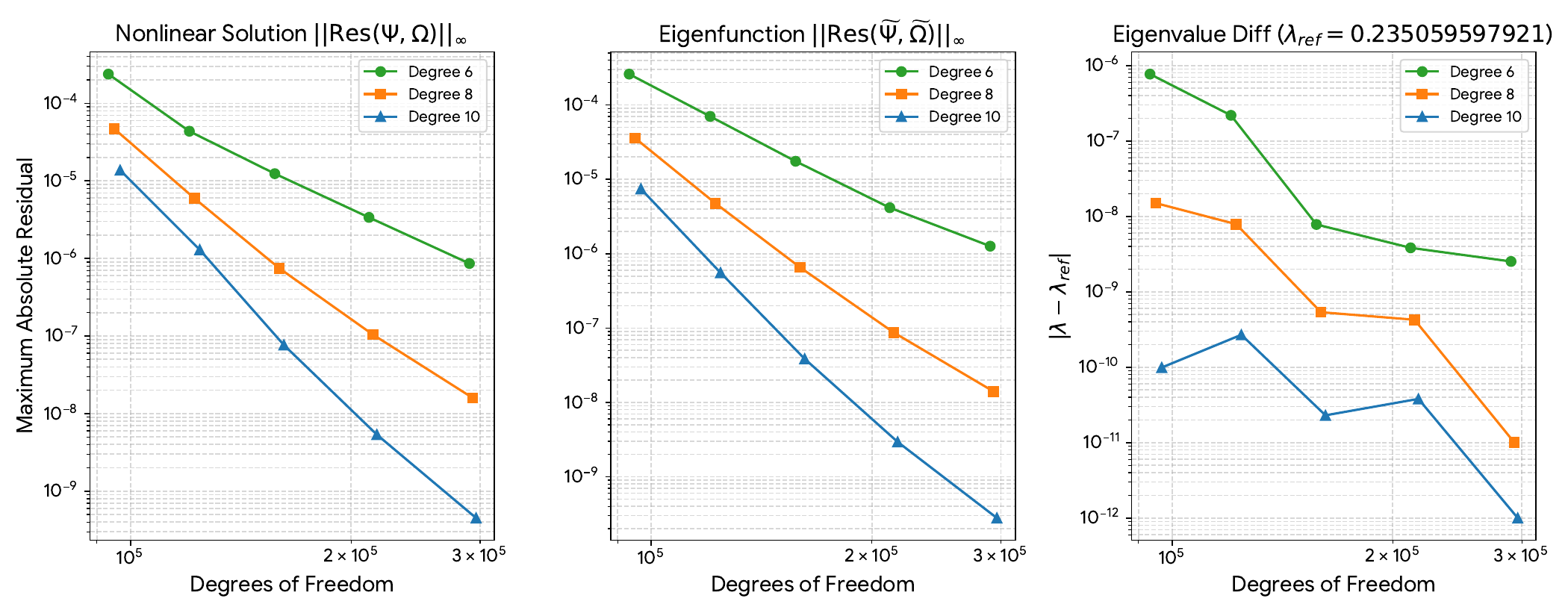}
\caption{Dense residuals and eigenvalue deviation of the refined solutions $(\Psi_2,\Omega_2)$ and $(\widetilde{\Psi}_2, \widetilde{\Omega}_2)$ using degree 6, degree 8, and degree 10 B-splines, with a total number of degrees of freedom ranging from 46,500 DOF to 148,000 DOF per variable. Notice the decrease of the dense residuals as the degrees of the B-splines and the number of degrees of freedom increases.}
\label{fig:validation}
\end{figure}

\subsection{Asymptotic expansion at infinity}\label{numexp} We can gain insight, both at the numerical and at the theoretical level, by expanding at infinity the solutions of the system \eqref{sf1} with asymptotic data \eqref{sf2}. We start with the top order terms, homogeneous of orders $-1$ for $\Psi$ and $-3$ for $\Omega$,
\begin{equation}\label{sf20}
\Psi_{-1}(r,z):=\frac{\sigma z}{r^2+z^2},\qquad \Omega_{-3}(r,z):=\frac{-6\sigma z}{(r^2+z^2)^2}.
\end{equation}
The first equation in \eqref{sf1} is satisfied identically for these functions. The second equation can be used to determine the homogeneous term $\Omega_{-5}$, by the formula
\begin{equation*}
\mathcal{L}\Omega_{-3}+\frac{1}{2}\big(r\partial_r+z\partial_z+3\big)\Omega_{-5}-r\partial_z\Psi_{-1}\partial_r\Omega_{-3}+(r\partial_r\Psi_{-1}+2\Psi_{-1})\partial_z\Omega_{-3}=0,
\end{equation*}
which leads to 
\begin{equation}\label{moreOmega}
\Omega_{-5}=\frac{12\sigma^2z(3z^2-2r^2)+24\sigma z(r^2+z^2)}{(r^2+z^2)^4}.
\end{equation}
We can then use the first equation in \eqref{sf1} to find the term of order $-3$ in $\Psi$, 
\begin{equation}\label{morePsi}
\Psi_{-3}=\frac{\sigma^2z(3r^2-7z^2)}{3(r^2+z^2)^3}-\frac{6\sigma z}{(r^2+z^2)^2}+\frac{\sigma'}{(r^2+z^2)^{3/2}},
\end{equation}
where $\sigma'\in\R$. Since our base solution is odd in $z$ we must have $\sigma'=0$, therefore
\begin{equation}\label{sf21}
\begin{split}
&\Psi(r,z)=\frac{\sigma z}{r^2+z^2}+\frac{\sigma^2z(3r^2-7z^2)}{3(r^2+z^2)^3}-\frac{6\sigma z}{(r^2+z^2)^2}\ldots,\\
&\Omega(r,z)=\frac{-6\sigma z}{(r^2+z^2)^2}+\frac{12\sigma^2z(3z^2-2r^2)+24\sigma z(r^2+z^2)}{(r^2+z^2)^4}\ldots.
\end{split}
\end{equation}
This gives us the refined boundary data \eqref{var13ref} we use to construct our refined solutions. 

Unfortunately, this procedure cannot be continued further. In fact, we could still get a homogeneous term of degree $-7$ in $\Omega$, by repeating the argument above, but this term is too small in the gluing region to matter. However, the equation $\mathcal{L}(\Psi)=0$ admits general homogeneous solutions of degree $-4$ at infinity of the form 
\begin{equation}\label{Psi_-4}
\Psi_{-4}(r,z)=\sigma''z(r^2+z^2)^{-5/2},\qquad\sigma''\in\R.
\end{equation}
These solutions are odd in $z$, thus acceptable. The choice of the parameter $\sigma''$ is determined by global smoothness considerations and cannot be found just by asymptotic analysis.

We use the refined expansion \eqref{sf21}, instead of the more basic expansion \eqref{sf2} in the second stage of our construction, mainly to get better estimates on the residual $|\mathrm{Res}(\Psi_2^\ast)|$ in the gluing region, which leads to better overall estimates on the global residual $\|\mathrm{Res}(\Psi)\|_{L^\infty}$ in \eqref{MainNum2}.

\subsubsection{Further asymptotics}\label{asym_extra} One can further improve the gluing residuals by using more precise asymptotic expansions of the functions $\Psi,\Omega$, with coefficients determined numerically. For example, we can allow a general homogeneous term of the form \eqref{Psi_-4} and optimize over the choice of parameter $\sigma''$. By including this term in our asymptotic function $\Psi_2^a$ and numerically minimizing the far-field difference $|\Psi_2-\Psi_2^a|$ over $\sigma''$, we find an optimal value of $\sigma'' \approx 276,500$. Rerunning the refinement step with this improved choice of 
\begin{equation*}
\Psi_2^a(r,z)=\frac{\sigma z}{r^2+z^2}+\frac{\sigma^2z(3r^2-7z^2)}{3(r^2+z^2)^3}-\frac{6\sigma z}{(r^2+z^2)^2}+\frac{\sigma''z}{(r^2+z^2)^{5/2}},
\end{equation*}
and making the corresponding changes in the boundary data \eqref{var13ref} ultimately reduces the value of $\|\mathrm{Res}(\Psi)\|_{L^\infty}$ in \eqref{MainNum2} by a factor of about $4$. This procedure could, in principle, be continued with more asymptotic terms, to reduce the gluing residuals of the variables.


\begin{thebibliography}{100}

\bibitem{AlBrCo} D. Albritton, E. Bru\'{e}, and M. Colombo, Non-uniqueness of Leray solutions of the forced
Navier--Stokes equations. Ann. of Math. (2) {\bf{196}} (2022), no. 1, 415--455.

\bibitem{AlGuKoRe} D. Albritton, J. Guillod, M. Korobkov, and X. Ren, Forward self-similar solutions to the 2D Navier--Stokes equations. Preprint (2026), arXiv:2601.03161.

\bibitem{BuVi} T. Buckmaster and V. Vicol, Nonuniqueness of weak solutions to the Navier--Stokes equation. 
Ann. of Math. (2) {\bf{189}} (2019), no. 1, 101--144.

\bibitem{ChenHou2022} J. Chen and T. Y. Hou, Stable nearly self-similar blowup of the 2D Boussinesq and 3D Euler equations with smooth data I: Analysis. Preprint (2022), arXiv:2210.07191.

\bibitem{ChenHou2023} J. Chen and T. Y. Hou, Stable nearly self-similar blowup of the 2D Boussinesq and 3D Euler equations with smooth data II: Rigorous Numerics. Preprint (2023), arXiv:2305.05660.

\bibitem{ChDaPa} A. Cheskidov, M. Dai, and S. Palasek, Instantaneous Type I blow-up and non-uniqueness of smooth solutions of the Navier--Stokes equations. Preprint (2025), arXiv:2511.09556.

\bibitem{ChLu1} A. Cheskidov and X. Luo. Sharp nonuniqueness for the Navier–Stokes equations. Invent. Math. {\bf{229}} (2022), no. 3, 987-1054.

\bibitem{ChLu2} A. Cheskidov and X. Luo. $L^2$-critical nonuniqueness for the 2D Navier--Stokes equations. Ann. PDE {\bf{9}} (2023), no. 2, Paper No. 13, 56 pp.

\bibitem{CoPa} M. Coiculescu and S. Palasek, Non-uniqueness of smooth solutions of the Navier--Stokes equations from critical data. Invent. Math. (to appear).

\bibitem{GaWa} T. Gallay and C. E. Wayne,  Invariant manifolds and the long-time asymptotics of the Navier--Stokes and vorticity equations on $\R^2$, Archive for Rational Mechanics and Analysis 163 (3), 209-258

\bibitem{GaSv} T. Gallay and V. \v Sver\'{a}k, Remarks on the Cauchy problem for the axisymmetric Navier--Stokes equations. 
Confluentes Math. {\bf{7}} (2015), no. 2, 67--92.

\bibitem{GuLiXi} C. Gui, H. Liu, and C. Xie, On the forward self-similar solutions to the two-dimensional Navier--Stokes equations. Preprint (2026), arXiv:2601.03833.

\bibitem{GuSv} J. Guillod and V. \v Sver\'{a}k, Numerical investigations of non-uniqueness for the Navier--Stokes initial value problem in borderline spaces. J. Math. Fluid Mech. {\bf{25}} (2023), no. 3, Paper No. 46, 25 pp.

\bibitem{HoWaYa} T. Hou, Y. Wang, and C. Yang, Nonuniqueness of Leray-Hopf solutions to the unforced incompressible 3D Navier--Stokes Equation. Preprint (2025), arXiv:2509.25116.

\bibitem{JiSv1} H. Jia and V. \v Sver\'{a}k, Local-in-space estimates near initial time for weak solutions of the Navier--Stokes equations and forward self-similar solutions. Invent. Math. {\bf{196}} (2014), no. 1, 233--265.

\bibitem{JiSv2} H. Jia and V. \v Sver\'{a}k, Are the incompressible 3d Navier--Stokes equations locally ill-posed in the natural energy space? J. Funct. Anal. {\bf{268}} (2015), no. 12, 3734--3766.

\bibitem{La} O. Ladyzhenskaya, Unique solvability in the large of the three-dimensional Cauchy problem
for the Navier--Stokes equations in the presence of axial symmetry. Zap. Nauchn. Semin.
Leningr. Otd. Mat. Inst. Steklova {\bf{7}} (1968), 155--177 (in Russian).

\bibitem{Leray} J. Leray, Sur le mouvement d'un liquide visqueux emplissant l'espace, Acta Math. {\bf 63} (1934), no.~1, 193--248.

\bibitem{UkYu} M. Ukhovskii and V. Yudovich, Axially symmetric flows of ideal and viscous fluids filling
the whole space. J. Appl. Math. Mech. {\bf{32}} (1968), 52--61.

\bibitem{PINN} Y. Wang, M. Bennani, J. Martens, S. Racani\'{e}re, S. Blackwell, A. Matthews, S. Nikolov, G. Cao-Labora, D. S. Park, M. Arjovsky, D. Worrall, C. Qin, F. Alet, B. Kozlovskii, N. Tomašev, A. Davies, P. Kohli, T. Buckmaster, B. Georgiev, J. G\'{o}mez-Serrano, R. Jiang, C.-Y. Lai, Discovery of unstable singularities. Preprint (2025), arXiv:2509.14185.


\end{thebibliography}
\end{document}